\documentclass[10pt,letterpaper,oneside]{amsart}

\usepackage[utf8]{inputenc}
\usepackage{bm}
\usepackage{geometry}
\usepackage{tikz}
\usetikzlibrary{calc}
\usetikzlibrary{arrows.meta, positioning, quotes}
\usepackage{subcaption}
\usepackage{caption}
\usepackage{amsfonts,amsmath,amssymb,amsthm,amsxtra,amsbsy,mathtools,mathrsfs,dsfont,lmodern,microtype,longtable,verbatim}
\usepackage[colorlinks=true, allcolors=blue]{hyperref}

\theoremstyle{plain}
\usepackage{bbm}
\usepackage[most]{tcolorbox}
\usepackage{mdframed}
\usepackage{xcolor}
\usepackage[T1]{fontenc}
\usepackage{hyperref}
\usepackage{enumerate}
\usepackage{graphicx}
\allowdisplaybreaks

\newmdtheoremenv[
  linecolor=blue,
  backgroundcolor=blue!5,
  leftmargin=0,
  rightmargin=0,
  skipabove=10pt,
  skipbelow=10pt,
  innerleftmargin=10pt,
  innerrightmargin=10pt,
  innertopmargin=10pt,
  innerbottommargin=10pt
]{theorem}{Theorem}[section]

\newmdtheoremenv[
  linecolor=green!50!black,
  backgroundcolor=green!5,
  leftmargin=0,
  rightmargin=0,
  skipabove=10pt,
  skipbelow=10pt,
  innerleftmargin=10pt,
  innerrightmargin=10pt,
  innertopmargin=10pt,
  innerbottommargin=10pt
]{definition}{Definition}[section]

\newmdtheoremenv[
  linecolor=gray,
  backgroundcolor=gray!10,
  leftmargin=0,
  rightmargin=0,
  skipabove=10pt,
  skipbelow=10pt,
  innerleftmargin=10pt,
  innerrightmargin=10pt,
  innertopmargin=10pt,
  innerbottommargin=10pt
]{remark}{Remark}[section]

\newmdtheoremenv[
  linecolor=purple,
  backgroundcolor=purple!10,
  leftmargin=0,
  rightmargin=0,
  skipabove=10pt,
  skipbelow=10pt,
  innerleftmargin=10pt,
  innerrightmargin=10pt,
  innertopmargin=10pt,
  innerbottommargin=10pt
]{lemma}{Lemma}[section]

\newmdtheoremenv[
  linecolor=orange!70!black,
  backgroundcolor=orange!5,
  leftmargin=0,
  rightmargin=0,
  skipabove=10pt,
  skipbelow=10pt,
  innerleftmargin=10pt,
  innerrightmargin=10pt,
  innertopmargin=10pt,
  innerbottommargin=10pt
]{proposition}{Proposition}[section]

\newmdtheoremenv[
  linecolor=teal,
  backgroundcolor=teal!5,
  leftmargin=0,
  rightmargin=0,
  skipabove=10pt,
  skipbelow=10pt,
  innerleftmargin=10pt,
  innerrightmargin=10pt,
  innertopmargin=10pt,
  innerbottommargin=10pt
]{corollary}{Corollary}[section]

\numberwithin{equation}{section}

\newcommand{\R}{\mathbb{R}}

\DeclareMathOperator{\Id}{Id}

\title[From CBO to PSO: Convergence Guarantees under Drift-Diffusion Coupling]{From Consensus-Based Optimization\\ to Particle Swarm Optimization: \\
Convergence Guarantees under Drift-Diffusion Coupling}

\author{Franca Hoffmann}
\address{Department of Computing and Mathematical Sciences, California Institute of Technology} 
\email{franca.hoffmann@caltech.edu} 

\author{Dohyeon Kim}
\address{Department of Computing and Mathematical Sciences, California Institute of Technology}
\email{dohyeon@caltech.edu} 

\author{Ritvik Teegavarapu}
\address{Division of Applied Mathematics, Brown University}
\email{rteegava@brown.edu}

\date{\today}
\keywords{Particle swarm optimization; consensus-based optimization; convergence to equilibrium; interacting particle
systems}
\subjclass[2020]{Primary 90C26; Secondary 35Q90, 60H10}

\allowdisplaybreaks

\begin{document}

\begin{abstract}
Particle swarm optimization (PSO) is a widely used algorithm featured in many state-of-the-art optimization tool-kits. However, rigorous performance guarantees are still lacking. The standard PSO dynamics do not admit a natural mean-field description, which would provide an avenue for theoretical analysis. By modifying the PSO formulation, one can recover the consensus-based optimization (CBO) algorithm with memory, which admits a mean-field limit and facilitates rigorous convergence analysis. These theoretical guarantees rely heavily on the fact that for CBO, the drift and noise strengths can be chosen independently, whereas they are coupled for PSO. We analyze how the PSO parameter coupling affects existing convergence guarantees for CBO and its variant with memory effect. We show, by an explicit construction, that the coupling still leaves a non-empty set of admissible parameters for these convergence guarantees to hold. However, the admissible parameter ranges shrink in the limits used to recover PSO. The resulting convergence guarantees from CBO therefore do not directly extend to the classical PSO model. We provide numerical simulations illustrating the parameter tradeoffs shown in the theoretical analysis. 
\end{abstract}

\maketitle

\section{Introduction}\label{sec1}
Many physical and biological systems consist of large collections of interacting agents, such as neurons in the brain \cite{BaladronFasoliFaugerasTouboul2012}, birds in a flock
\cite{cucker2007, meancuck2009, VicsekEtAl1995, BalleriniEtAl2008}, or atoms in
a lattice \cite{Spohn1991}. Formally, these \emph{interacting particle systems} describe the evolution of many individual agents through stochastic differential equations whose interactions drive emergent macroscopic behavior. Some computational approaches known as \emph{metaheuristic methods} \cite{DreoEtAl2006, zito2025Metaheuristics} leverage multiple interacting agents to solve global optimization problems. These methods balance exploration of the search space and exploitation of already detected promising regions by allowing particles to exchange information about their positions and performance. The general objective is to find the global minimizer of a given cost function $J:\R^d\to\R$,
$$x^* = \arg\min_{x \in \mathbb{R}^d} J(x).$$
We consider $N$ particles evolving on the landscape of $J(x)$ according to coupled stochastic differential equations (SDEs). Each particle’s motion combines random exploration with attraction toward favorable regions based on information shared within the ensemble. In particular, we wish to design algorithms that do not need access to the gradient of the cost function as its derivatives may not be available, or too expensive to compute. This is of particular interest in real-world problems where classical algorithms like gradient descent fail to converge reasonably fast when operating on high-dimensional systems, or in the setting of Bayesian inverse problems, where the target measure is often only accessible as a zero-order oracle \cite{Stuart2010Bayesian, IglesiasLawStuart2013}. Among interacting particle algorithms, \emph{particle swarm optimization} (PSO) \cite{kennedy95particle} and \emph{consensus-based optimization} (CBO) \cite{PinnauTotzeckTseMartin2017} are two related families of algorithms. PSO is a classical and widely implemented method for non-convex optimization, used extensively in engineering applications, and it comes with a substantial stability and parameter-selection literature of its own: \cite{ClercKennedy2002} derives the constriction coefficients that keep the swarm from diverging, and \cite{Trelea2003} analyzes convergence of the deterministic recursion and the resulting parameter regions, with related trajectory and stochastic-stability analyses in \cite{VanDenBerghEngelbrecht2006, JiangLuoYang2007}. That literature constrains the same coefficients $c_1, c_2$ we study here, but through the stability of the discrete recursion rather than through an approximate mean-field decay estimate. CBO, on the other hand, is a more recent formulation inspired by simulated annealing, in which the Laplace principle is used to locate the low-objective region of the search space, and hence an approximate minimizer, rather than approximating a gradient. Further, the fact that CBO has a well-defined mean-field description facilitates rigorous analysis of convergence properties \cite{HuangQiu2022MeanFieldCBO}. 
\newline
\newline
The situation for PSO is more delicate, and we distinguish two formulations throughout. The classical algorithm \eqref{eqn:pso_all}, in which the global and personal bests are selected by an $\mathsf{argmin}$ over the ensemble, is not directly amenable to standard mean-field analysis, since those selections are discontinuous in the particle positions. This is the version most commonly implemented in available optimization toolboxes, and for which very little to no theoretical guarantees exist still to this date. Regularized continuous-time formulations, in which the global best is replaced by a weighted mean and the personal best by a smooth surrogate, do admit mean-field descriptions \cite{grassi2020particle, Huang2021MeanFieldPSO, grassi2021meanfieldparticleswarmoptimization} and do come with convergence results \cite{Huang_2023}; it is such a formulation that we analyze here. The classical discontinuous (with respect to particle positions due to the $\mathsf{argmin}$) algorithm itself has recently been taken up directly in \cite{BorghiHuangKim2026}, and how much is given up in passing to the regularized model is the subject of Sections~\ref{sec4}--\ref{sec5} below.
\newline
\newline
In PSO, each particle updates its velocity and position using two sources of information: the \emph{global best} position across all particles at the present time, and the \emph{personal best} position along its own trajectory up to the present time. The dynamics are inherently second-order, with inertia, a drift toward these best positions, and stochastic perturbations. However, this classical formulation is not directly amenable to standard mean-field analysis, because the minimum operation used to select the best positions is discontinuous in the particle positions; the reductions listed next are what restore the continuity such an analysis needs. CBO can be recovered from PSO through a series of reductions (see Section \ref{sec2} for more details):
\begin{enumerate}
    \item Remove the attraction toward each particle's personal-best position and thus, remove the memory effect by suppressing the personal-best term.
    \item Replace the discrete global-best position with a smooth, weighted average (a ``softmin'' approximation) over the ensemble.
    \item Take the small-inertia limit to obtain a first-order system. 
\end{enumerate}
Each of these three modifications has been studied independently in the literature. Reduction (1) is the most drastic, since it discards the memory that distinguishes PSO from CBO in the first place, but it is a defensible approximation: numerical studies of the swarm report that the attraction toward the global best typically dominates the attraction toward the personal best over most of a run, suggesting that removing the personal-best term has a limited effect on the qualitative behavior of the dynamics \cite{grassi2020particle, grassi2021meanfieldparticleswarmoptimization}. This is also what makes the memory-augmented models \cite{TotzeckWolfram2020, BorghiGrassiPareschi2023, Riedl2024Memory} a refinement rather than a correction of CBO. Reduction (2) is the softmin approximation justified by the Laplace principle, and Reduction (3) has been established rigorously as a zero-inertia limit in \cite{CiprianiHuangQiu2022}. These modifications yield dynamics that are continuous and analytically tractable, allowing the use of PDE analysis and mean-field tools to establish convergence to the global minimizer. One key difference between the PSO and CBO algorithms is that the latter allows $\textit{independent}$ control of the drift and diffusion strengths, while the former assumes them coupled. In particular, this independent control allows for more freedom in the behavior of CBO. 
\newline
\newline
The first direction of existing literature is the classical analysis of the discrete PSO recursion \eqref{eqn:pso_all}, where stability and parameter-selection results \cite{ClercKennedy2002, Trelea2003, VanDenBerghEngelbrecht2006, JiangLuoYang2007} identify coefficient regions in which particle trajectories remain bounded or converge; see \cite{ShiEberhart1998} for the inertia-weight variant these analyses are usually stated for, and \cite{PoliKennedyBlackwell2007, BonyadiMichalewicz2017} for surveys. The second is the passage from PSO to CBO: \cite{grassi2020particle} performs this at the level of stochastic modeling and obtains a mean-field limit for the regularized system, \cite{Huang2021MeanFieldPSO, grassi2021meanfieldparticleswarmoptimization} develop the mean-field formulation further, and \cite{CiprianiHuangQiu2022} makes the small-inertia step rigorous as a zero-inertia limit. The third is the mean-field theory of CBO itself, from the original model and its formal limit \cite{PinnauTotzeckTseMartin2017} and the analytical framework of \cite{carrillo2018analytical}, through the anisotropic variant \cite{CarrilloJinLiZhu2021, FornasierKlockRiedl2022}, the global-convergence results of \cite{Fornasier_2024}, and the first-order and time-discrete analyses of \cite{HaJinKim2020, HaJinKim2021, HaHwangKim2024}, to the propagation-of-chaos results that justify the mean-field description \cite{HuangQiu2022MeanFieldCBO, Gerber_2025, gerber2026uniformintimepropagationchaosconsensusbased}; \cite{Totzeck2022Trends} surveys the area. The fourth is the memory-augmented variants that reinstate the personal best \cite{TotzeckWolfram2020, BorghiGrassiPareschi2023, Riedl2024Memory}, of which \cite{Huang_2023} supplies the convergence theorem we work with.  What the present paper adds is orthogonal to all four: rather than proving a new convergence result, we ask how \emph{parameter couplings in PSO} affect the \emph{convergence guarantees} known for CBO and its \emph{memory-augmented variant}. It is not necessarily true that existing results still hold when the strengths of the drift and the diffusion cannot be chosen independently, and we will qualitatively investigate the convergence results of \cite{PinnauTotzeckTseMartin2017, Huang_2023} under this loss of degrees of freedom. We highlight the difficulty this poses in showing the existence of a parameter set under which convergence guarantees hold, and discuss potential ways to overcome them with a Wasserstein control approach as done in \cite{Fornasier_2024}. 
\newline 
\newline 
In Section \ref{sec2}, we present the various models (PSO, CBO, CBO with memory effect) to illustrate the reductions needed to relate the models. In Section \ref{sec3}, we show that there is still an admissible set of parameters for a regularized first order version of PSO under the convergence guarantees for the first-order CBO model. In Section \ref{sec4}, we show that there exists an admissible set of parameters for CBO with memory when the same coupling of parameters as in PSO is used. In Section \ref{sec5}, we summarize the most important tradeoffs in the parameter relations. In Section \ref{sec6}, we present numerical simulations illustrating our theoretical results from the previous sections. In Section \ref{sec7}, we discuss future directions to show convergence guarantees for the original PSO model. 

\section{Methods} \label{sec2}
\subsection{Models}
We first write down the original PSO model as presented in \cite{grassi2020particle}. The matrices $R_1^n, R_2^n \in \mathbb{R}^{d\times d}$ are diagonal matrices with uniformly distributed diagonal entries on the interval $[0,1]$, and are generated for each particle and time iteration. The particle positions $\{x_i\}_{i=1}^N$ and their velocities $\{v_i\}_{i=1}^{N}$ evolve according to
\begin{subequations}\label{eqn:pso_all}
\begin{equation}\label{eqn:pso}
\begin{split}
x^{n+1}_i &= x^n_i + v^{n+1}_i, \\ 
v^{n+1}_i &= v^{n}_i + c_1R_1^n(y^n_i - x^n_i) + c_2R_2^n(\overline{y}^n - x^n_i).
\end{split}
\end{equation}
The $N$ particles are each accelerated towards two points, namely the \emph{personal} best $y^n_i$ and the \emph{global best} $\overline{y}^n$ with respective strengths $c_1, c_2 > 0$, defined below. 
\begin{equation}\label{eqn:pso_points}
\begin{split}
y^{0}_i &= x_i^0, \\ 
y^{n+1}_i &= \begin{cases} y_i^n & \text{if} \ J(x_i^{n+1}) \geq J(x_i^n), \\ x_i^{n+1} & \text{if} \ J(x_i^{n+1}) < J(x_i^n), \\ \end{cases} \\ 
\overline{y}^{0} &= \mathsf{argmin}\{J(x_1^0), J(x_2^0), \cdots, J(x_N^0)\}, \\ 
\overline{y}^{n+1} &= \mathsf{argmin}\{J(x_1^{n+1}), J(x_2^{n+1}), \cdots, J(x_N^{n+1}), J(\overline{y}^n)\}. \\ 
\end{split}
\end{equation}
\end{subequations}
We note that by decomposing the random matrices $R_1^n, R_2^n$ into a deterministic component given by the mean $\frac12 \Id$ and a stochastic component given by the fluctuations $(R_i^n-\frac 12 \Id)$ around the mean, the velocity update of \eqref{eqn:pso} splits into a deterministic drift term and a zero-mean stochastic term, and passing to a diffusive limit, one can derive a time-continuous formulation of PSO by introducing the memory effect strength $\kappa > 0$ \cite{grassi2020particle}:
\begin{subequations}\label{eqn:pso_sde_all}
\begin{equation}\label{eqn:pso_sde}
\begin{split}
dX^i_t &= V^i_t \ dt,\\ 
dY^i_t &= \kappa (X_t^i - Y_t^i) \cdot S(X_t^i, Y_t^i) \ dt, \\
\overline{Y}_t &= \mathsf{argmin}\{J(Y_t^1), J(Y_t^2), \dots, J(Y_t^N)\}, \\
dV^i_t &= \frac{c_1}{2} (Y_t^i - X_t^i) \ dt + \frac{c_2}{2} (\overline{Y}_t - X_t^i) \ dt + \frac{c_1}{2\sqrt{3}} D(Y_t^i - X_t^i) \ dB^{1,i}_t + \frac{c_2}{2\sqrt{3}} D(\overline{Y}_t - X_t^i) \ dB^{2,i}_t. \\ 
\end{split}
\end{equation}
The term $D(X_t)$ represents the anisotropic diffusion, $B_t^{k,i}$ is an ensemble of $d$-dimensional Brownian motions for $k = 1,2$ and $i = 1, \dots, N$, and $S(x,y)$ is the shifted Heaviside function of the objective: 
\begin{equation}\label{eqn:time_cont}
\begin{split}
D(X_t) &= \text{diag}\{(X_t)_1, (X_t)_2, \cdots, (X_t)_d\} \in \mathbb{R}^{d \times d}  \qquad S(x,y) = 1 + \text{sign}(J(y) - J(x)).
\end{split}
\end{equation}
\end{subequations}
We emphasize that while PSO retains a memory effect in the evolution due to the presence of the approximate personal best $Y_t^i$, it couples together the drift and diffusion strength by $c_1$ and $c_2$ for each, the personal best $Y_t^i$ and the global best $\overline{Y}_t$.
\newline
\newline
If one wanted to ignore the memory effect entirely and thus remove the personal best from the dynamics (see Step 1 above), it is sufficient to let $c_1 = 0$ in the dynamics written in \eqref{eqn:pso_sde}. Even then, PSO lacks a mean-field description due to the presence of the minimum in the computation of the global best $\overline{Y}_t$, which is non-continuous in the particle positions. To avoid this problem, we use the classical Laplace's principle to approximate the global best (Step 2 above), which tells us that most of the mass of the measure concentrates around the global minimizer of the function as the \emph{inverse temperature} ($\alpha$) is taken large. 

\begin{theorem}[Laplace's Principle \cite{DemboZeitouni2010}]
For any compactly supported $\rho \in \mathcal{P}(\mathbb{R}^d)$,
$$\lim_{\alpha \rightarrow \infty} \left(\frac{-1}{\alpha} \cdot \log\left(\int_{\mathbb{R}^d} \exp(-\alpha J(x)) \ \text{d}\rho\right)\right) = \inf_{x \in \mathsf{supp}(\rho)} J(x)$$
\end{theorem}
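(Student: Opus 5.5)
We will prove the two matching bounds $\limsup \le \inf$ and $\liminf \ge \inf$ separately. We assume throughout that $J$ is continuous, as is implicit in the setting. Write $\underline{J} := \inf_{x\in\mathsf{supp}(\rho)} J(x)$. Since $\rho$ is compactly supported, $\mathsf{supp}(\rho)$ is compact, so $\underline{J}$ is finite and attained at some $x_0 \in \mathsf{supp}(\rho)$. Set $Z_\alpha := \int_{\mathbb{R}^d} \exp(-\alpha J)\,\text{d}\rho$. Since $\rho$ is concentrated on its support, we may integrate over $\mathsf{supp}(\rho)$ only.

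\emph{Lower bound on the limit.} On $\mathsf{supp}(\rho)$ we have $J \ge \underline{J}$. Hence $Z_\alpha \le e^{-\alpha \underline{J}}\,\rho(\mathsf{supp}(\rho)) = e^{-\alpha\underline{J}}$. Taking logarithms and multiplying by $-1/\alpha<0$ gives
\[
-\tfrac{1}{\alpha}\log Z_\alpha \ge \underline{J} \quad\text{for every } \alpha>0,
\]
so $\liminf_{\alpha\to\infty} \bigl(-\tfrac1\alpha \log Z_\alpha\bigr) \ge \underline{J}$.

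\emph{Upper bound on the limit.} Fix $\varepsilon>0$. By continuity of $J$ at $x_0$ there is $\delta>0$ with $J < \underline{J}+\varepsilon$ on the ball $B_\delta(x_0)$. Because $x_0\in\mathsf{supp}(\rho)$, every open neighbourhood of $x_0$ has positive mass, so $m_\delta := \rho(B_\delta(x_0))>0$. Restricting the integral to this ball gives
\[
Z_\alpha \ge \int_{B_\delta(x_0)} e^{-\alpha J}\,\text{d}\rho \ge m_\delta\, e^{-\alpha(\underline{J}+\varepsilon)} .
\]
It follows that
\[
-\tfrac1\alpha \log Z_\alpha \le \underline{J}+\varepsilon - \tfrac{1}{\alpha}\log m_\delta .
\]
Here $m_\delta$ does not depend on $\alpha$, so the last term vanishes as $\alpha\to\infty$. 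This yields $\limsup_{\alpha\to\infty}\bigl(-\tfrac1\alpha\log Z_\alpha\bigr) \le \underline{J}+\varepsilon$. Since $\varepsilon>0$ was arbitrary, the $\limsup$ is at most $\underline{J}$. Combined with the lower bound, the limit exists and equals $\underline{J}$.

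The only delicate point is the upper bound. It requires a neighbourhood of the minimizer with positive $\rho$-mass, which is exactly what the definition of support provides. It also requires continuity of $J$, without which the infimum over the support could be attained only on a $\rho$-null set and the identity would fail. Compactness of the support is used to guarantee that the infimum is finite and attained; if attainment failed, one would instead choose $x_0$ with $J(x_0)<\underline{J}+\varepsilon/2$ and argue in the same way.
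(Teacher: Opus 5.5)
Your proof is correct. Both bounds hold, $Z_\alpha\in(0,\infty)$ is automatic because $e^{-\alpha J}>0$ and $J$ is bounded on the compact support, and assuming continuity of $J$ is legitimate since the paper works with $J\in\mathcal{C}^2$ under (A2)/(B2).

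The paper gives no proof of its own; it cites \cite{DemboZeitouni2010}. There the statement is a special case of Varadhan's integral lemma. The constant family $\mu_\alpha\equiv\rho$ satisfies a large deviation principle at speed $\alpha$ with rate function $I=0$ on $\mathsf{supp}(\rho)$ and $I=+\infty$ elsewhere; this is a \emph{good} rate function precisely because the support is compact. Applying the lemma with $\phi=-J$ gives $\sup_x(-J(x)-I(x))=-\inf_{\mathsf{supp}(\rho)}J$.

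Your argument is that proof reduced to its elementary core. Your upper bound is exactly the local lower-bound step of Varadhan's lemma: an open neighbourhood of a near-minimizer carrying positive mass. Your lower bound becomes a one-line pointwise estimate, because the measure does not depend on $\alpha$; the general case needs a covering and tail argument here instead.

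Your route is more transparent and shows that compactness is used only to make $\underline{J}$ and $Z_\alpha$ finite. As your remark on approximate minimizers indicates, boundedness of $J$ from below on the support would suffice. The large-deviation route is more general: it covers $\alpha$-dependent families of measures and unbounded supports under exponential tightness or moment conditions.

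One small correction to the wording of your last paragraph. Even for continuous $J$, the set of minimizers can be $\rho$-null, for example a single point under an absolutely continuous $\rho$. What continuity supplies is a whole neighbourhood of near-minimal values, and it is that neighbourhood, not the minimizer set, that carries positive mass.
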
 
\vspace{0.5em}
Next, let us describe the CBO dynamics as presented in \cite{CarrilloJinLiZhu2021} for $\lambda, \sigma, \alpha \in \mathbb{R} > 0$. Here, $\lambda$ represents the drift strength, $\sigma$ the diffusion strength, and $\alpha$ the inverse temperature. 
\begin{subequations}\label{eqn:cbo-all}
\begin{equation}\label{eqn:cbo}
dX_t^i = -\lambda \cdot (X_t^i - m_{\alpha}(\rho_{X,t}^N)) \ dt + \sigma \cdot D(X_t^i - m_{\alpha}(\rho_{X,t}^N)) \ \mathrm{d}B_t^i.
\end{equation}
The vector $m_{\alpha}(\rho_{X,t}^N)$ is a weighted mean of particles whose weight function $\omega^{\alpha}_J = \exp(-\alpha \cdot J(x))$ permits the application of Laplace's principle:
\begin{equation}\label{eqn:laplace}
\begin{split}
m_{\alpha}(\rho) &:= \frac{1}{\lVert \omega^{\alpha}_J \rVert_{L^1(\rho)}} \int_{\mathbb{R}^d} x \cdot \omega^{\alpha}_J(x) \ d\rho, \qquad \rho_{X,t}^N = \frac{1}{N} \cdot \sum_{i=1}^{N} \delta_{X_t^i}
\end{split}
\end{equation}
\end{subequations}
Since $\text{supp}(\rho_{X,t}^N) = \{X_t^i\}_{i=1}^{N}$, it is clear by Laplace's principle that for $\alpha \gg 1$, the regularized global best $m_{\alpha}(\rho_{X,t}^N) \approx \overline{X}_t$, where
$$\overline{X}_t = \mathsf{argmin}\{J(X_t^1), J(X_t^2), \cdots, J(X_t^N)\}$$
Equation \eqref{eqn:cbo} is the finite-particle, \emph{anisotropic} model: the noise acts coordinatewise through $D(\cdot)$, which is the form inherited from the PSO velocity update \eqref{eqn:pso_sde}, and it is therefore the model we carry through the hierarchy of Figure \ref{fig:model_chart}. The convergence theory we invoke in Section~\ref{sec3} is that of \cite{carrillo2018analytical}, which is stated instead for the \emph{isotropic mean-field} model
\begin{equation}\label{eqn:cbo_iso}
\mathrm{d}\overline{X}_t = -\lambda \cdot \big(\overline{X}_t - m_{\alpha}(\rho_t)\big) \ \mathrm{d}t + \sigma \cdot \big|\overline{X}_t - m_{\alpha}(\rho_t)\big| \ \mathrm{d}B_t, \qquad \rho_t = \mathrm{Law}(\overline{X}_t),
\end{equation}
in which the diffusion coefficient is scalar and the empirical measure $\rho^N_{X,t}$ has been replaced by the mean-field law $\rho_t$. Working with \eqref{eqn:cbo_iso} is a matter of convenience rather than necessity: convergence statements for the anisotropic diffusion $D(\cdot)$ are also available in \cite{FornasierKlockRiedl2022} on the mean-field level, and \cite{CarrilloJinLiZhu2021} establishes a direct analogue of \cite[Theorem 4.1]{carrillo2018analytical} for that model. Both \eqref{eqn:cbo} and \eqref{eqn:cbo_iso} are \emph{first-order} systems, evolving the positions alone, whereas PSO \eqref{eqn:pso_sde} is second-order including momentum. Reduction (3) of Section~\ref{sec1} is what bridges the two: as the inertia $m \to 0$ the velocity equilibrates on a fast time scale and the second-order dynamics collapse onto a first-order system, a passage established rigorously as a zero-inertia limit in \cite{CiprianiHuangQiu2022}. Accordingly, Section~\ref{sec:cbo_first_order} and Section~\ref{sec3} refer to \eqref{eqn:cbo_iso}, with isotropic diffusion, whereas Section~\ref{sec:cbo_memory} and Section~\ref{sec4} focus on a second-order model with memory effect and anisotropic diffusion, more closely related to the PSO dynamics \eqref{eqn:pso_sde}. 
\newline
\newline
In the case of the CBO models \eqref{eqn:cbo-all} and \eqref{eqn:cbo_iso}, we have independent drift and diffusion strengths and no memory effect present. Convergence for the CBO dynamics is usually obtained via a two-step procedure (assuming that the objective $J$ admits a unique global minimizer $x^*$):
\begin{enumerate}
    \item $\textbf{Consensus Point Formation}$: For fixed $\alpha > 0$, as $t \rightarrow \infty$, $m_{\alpha}(\rho_t) \rightarrow \hat{x}_{\alpha}$, known as the consensus point since $\rho_t \rightarrow \delta_{\hat{x}_{\alpha}}$ as $t \rightarrow \infty$ for $\rho_t$ solving (\ref{eqn:cbo_iso}).
    \item $\textbf{Global Minimizer Approximation}$: As $\alpha \rightarrow \infty$, $\hat{x}_{\alpha} \rightarrow x^*$, the global minimizer by Laplace's principle.
\end{enumerate}
CBO with memory also retains a mean-field level description with independent drift and diffusion coefficients. We write down the CBO with memory equations as presented in \cite{Huang_2023} with parameters $m, \gamma, \lambda_1, \lambda_2, \sigma_1, \sigma_2, \kappa, \beta, \alpha, \theta \geq 0$ and $\gamma = 1-m \geq 0$:
\begin{subequations}\label{eqn:cbo_mem_all}
\begin{equation}\label{eqn:cbo_mem}
\begin{split}
dX^i_t &= V^i_t \ \text{d}t, \\
dY^i_t &= \kappa \cdot (X^i_t - Y^i_t) \cdot S^{\beta,\theta}(X^i_t, Y^i_t) \ \text{d}t, \\
m dV^i_t &= -\gamma V^i_t \ \text{d}t + \lambda_1 \cdot (Y^i_t - X^i_t) \ \text{d}t + \lambda_2 \cdot (m_{\alpha}(\hat{\rho}^{N}_{Y,t}) - X^i_t) \ \text{d}t \\ &+ \sigma_1 \cdot D(Y^i_t - X^i_t) \ \text{d}B^{1,i}_t + \sigma_2 \cdot D(m_{\alpha}(\hat{\rho}^{N}_{Y,t}) - X^i_t) \ \text{d}B^{2,i}_t. \\
\end{split}
\end{equation}
The CBO with memory model differs from PSO and CBO in two ways. The first is the replacement of the shifted Heaviside function in \eqref{eqn:time_cont} with a regularized Heaviside function $S^{\beta,\theta}(x,y)$ that converges to $S(x,y)$ as $\theta \rightarrow 0, \beta \rightarrow \infty$, and the second is that the weighted mean is defined in terms of the personal bests of the particles as opposed to the particles themselves.
\begin{equation}\label{eqn:cbo_mem_eq}
\begin{split}
S^{\beta,\theta}(x,y) &= 1 + \theta + \tanh(\beta \cdot (J(y) - J(x))) \qquad \hat{\rho}_{Y,t}^N = \frac{1}{N} \cdot \sum_{i=1}^{N} \delta_{Y_t^i}. \\
\end{split}
\end{equation}
\end{subequations}
To unify these models, the following visual shows the parameters of each algorithm and various reductions to recover PSO and CBO from the intermediary CBO with memory model. We note that to recover CBO from CBO with memory, one must have that $Y_0^i = X_0^i$ for all $n$ particles initially. Furthermore, the friction $\gamma = 1-m$ is common in the literature for second-order models. 
\begin{figure}[!ht]
\begin{center}
\begin{tikzpicture}[
    node distance=1.8cm and 2.3cm,
    every node/.style={rectangle, draw, minimum height=1cm, minimum width=1.5cm, align=center, font=\small},
    every path/.style={->, thick}
]

    \node (cbo) {CBO \\ model (\ref{eqn:cbo-all})};
    \node (cboMemory) [above=of cbo] {CBO with memory \\ model (\ref{eqn:cbo_mem_all})};
    \node (pso) [above=of cboMemory] {PSO \\ model (\ref{eqn:pso_sde_all})};

    \node[draw=none, right=2.1cm of cboMemory, align=left, font=\small] (params_cbo_memory) {$(\kappa, \alpha, \lambda_2, \sigma_2, \beta, \theta, m, \gamma, \lambda_1, \sigma_1)$};
    \node[draw=none, right=4.05cm of pso, align=left, font=\small] (params_pso) {$(\kappa, c_1, c_2)$};
    \node[draw=none, right=3.98cm of cbo, align=left, font=\small] (params_cbo) {$(\alpha, \lambda_2, \sigma_2)$};
    
    \draw[<-] (pso) -- (cboMemory) node[midway, right, draw=none] {$\theta \to 0$ \\ $\beta \rightarrow \infty$ \\ $m \rightarrow 1$ \\ $\alpha \rightarrow \infty$ };
    \draw[->] (cboMemory) -- (cbo) node[midway, right, draw=none] {$m \rightarrow 0$ \\ \\ $\kappa \to 0$};
    \draw[->] (params_cbo_memory) -- (params_pso) node[midway, right, draw=none] {$\lambda_i = \frac{c_i}{2}$ \\  $\sigma_i = \frac{c_i}{2\sqrt{3}}$ \\ 
    $\gamma = 1-m$};
    \draw[->] (params_cbo_memory) -- (params_cbo) node[midway, right, draw=none] {$\beta = 0$ \\ $\theta = 0$ \\ $\lambda_1 = 0$ \\ $\sigma_1 = 0$ 
    \\ $\gamma = 1-m$ };
    \draw[->] (cboMemory) -- (params_cbo_memory);
\end{tikzpicture}
\end{center}
\caption{Relations of parameters of PSO, CBO with memory, and CBO models}
\label{fig:model_chart}
\end{figure}
\subsection{Convergence Guarantees for First Order CBO}\label{sec:cbo_first_order}
A typical method for  studying interacting particle systems is to construct a suitable energy functional for the ensemble and show that it decays over time. For the first-order isotropic mean-field CBO dynamics \eqref{eqn:cbo_iso}, the relevant energy is the variance of the mean-field law $\rho_t$ around its own mean, defined by
\begin{equation}\label{eqn:cbo_variance}
V(\rho_t) := \frac{1}{2}\int_{\mathbb{R}^d} |x - \mathbb{E}(\rho_t)|^2 \ \mathrm{d}\rho_t, \qquad \mathbb{E}(\rho_t) := \int_{\mathbb{R}^d} x \ \mathrm{d}\rho_t.
\end{equation}
The idea, originally due to \cite{carrillo2018analytical}, is that if $V(\rho_t) \to 0$ exponentially fast, the mean-field law concentrates onto a Dirac mass $\delta_{x^*_\alpha}$ located at some consensus point $x^*_\alpha$. This decay is only guaranteed once the objective $J$ satisfies a regularity condition controlling how the weighted mean $m_\alpha(\rho_t)$ deviates from the bulk of the distribution. Following \cite{PinnauTotzeckTseMartin2017}, we assume the objective $J:\R^d\to\R$ satisfies 
\begin{enumerate}
    \item[(A1)] $\underline{J} := \inf J > 0$;
    \item[(A2)] $J \in \mathcal{C}^2(\mathbb{R}^d)$ with $\lVert \nabla^2 J \rVert_{\infty} \leq c_J$ for some constant $c_J > 0$.
\end{enumerate}
Condition (1) is a normalization that can always be satisfied via an additive shift of $J$, since it does not change the location of the minimizer. Condition (2), as the authors of \cite{carrillo2018analytical} note, may be enforced away from any bounded region without disturbing the location of the global minimizer, so it accommodates a rich class of nonconvex objectives with many spurious local minima (e.g.\ the Ackley and Rastrigin benchmarks).
\begin{remark}
The results in \cite[Section 4]{PinnauTotzeckTseMartin2017} rely on \cite[Assumption 4.1]{PinnauTotzeckTseMartin2017}, which additionally assumes $\Delta J \leq a_0 + a_1|\nabla J|^2$ in $\mathbb{R}^d$ for some parameters $a_0, a_1\ge 0$. We note that this condition follows from the Hessian bound $\lVert \nabla^2 J \rVert_{\infty} \leq c_J$ since 
$$\Delta J = \mathsf{tr}(\nabla^2 J) \leq d\lVert \nabla^2 J \rVert_{\infty} \leq dc_J$$
In fact, all results in \cite{PinnauTotzeckTseMartin2017} hold true only assuming (A1) above and choosing $c_0 = dc_J$, $c_1 = 0$ in \cite{PinnauTotzeckTseMartin2017}. The role of the Laplacian bound is that the well-posedness results in \cite[Sections 2-3]{PinnauTotzeckTseMartin2017} still hold in that case without the Hessian bound assumption. Since our work focues on the convergence results from \cite[Section 4]{PinnauTotzeckTseMartin2017}, which crucially rely on a Lipschitz control for the gradient of $J$, we will directly work with the Hessian bound here. Thus, we can apply \cite[Lemma 4.1]{PinnauTotzeckTseMartin2017} for any $\alpha > 0$. 
\end{remark}
\vspace{0.5em}
Next, we recall the main convergence result for the first-order CBO model \eqref{eqn:cbo_iso} that will form the basis of our analysis in Section~\ref{sec3}.
\begin{theorem}[{\cite[Theorem 4.1]{carrillo2018analytical}}]\label{thm:cbo_variance}
Let $J$ satisfy Assumptions (A1)-(A2), and let $\rho_t$ solve the isotropic mean-field CBO dynamics \eqref{eqn:cbo_iso}. Set $V_0 := V(\rho_0)$, $b_0 := \lVert \omega_J^\alpha \rVert_{L^1(\rho_0)}$, and
$$b_1 = b_1(\alpha, \lambda, \sigma) := 2\alpha e^{-2\alpha \underline{J}} \big(dc_J \sigma^2 + 2\lambda c_J\big).$$
Suppose the parameters $\alpha, \lambda, \sigma$ satisfy
$$b_1 < \frac{3}{4} \quad (\star), \qquad 2\lambda b_0^2 - V_0 - 2d\sigma^2 b_0 e^{-\alpha \underline{J}} \geq 0. \quad (\star\star)$$
Then $V(\rho_t) \leq V(\rho_0) e^{-qt}$ for all $t \geq 0$, with rate
$$q = 2\left(\lambda - \frac{d\sigma^2}{b_0} e^{-\alpha \underline{J}}\right) \geq V_0/b_0^2.$$
In particular, there exists a point $x^*_\alpha \in \mathbb{R}^d$, possibly depending on $\rho_0$, such that
$$\mathbb{E}(\rho_t) \rightarrow x^*_\alpha \qquad \text{and} \qquad m_\alpha(\rho_t) \rightarrow x^*_\alpha \qquad \text{as } t \rightarrow \infty.$$
\end{theorem}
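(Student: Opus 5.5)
We follow the energy method of \cite{carrillo2018analytical}: differentiate $V(\rho_t)$ along \eqref{eqn:cbo_iso}, bound the dependence on $m_\alpha(\rho_t)$ by a controlled quantity, and close a Grönwall inequality. The key auxiliary quantity is the normalizing mass $\lVert \omega^\alpha_J\rVert_{L^1(\rho_t)}$, which must stay bounded below by $b_0/2$ uniformly in time.

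\textbf{Step 1: variance identity.} By Itô's formula applied to $|x-\mathbb{E}(\rho_t)|^2$, and using $\frac{d}{dt}\mathbb{E}(\rho_t) = -\lambda(\mathbb{E}(\rho_t)-m_\alpha(\rho_t))$, we get
\begin{equation*}
\frac{d}{dt}V(\rho_t) = -2\lambda V(\rho_t) + \frac{d\sigma^2}{2}\int |x-m_\alpha(\rho_t)|^2\,d\rho_t .
\end{equation*}
The second moment about $m_\alpha$ equals $2V(\rho_t)+|\mathbb{E}(\rho_t)-m_\alpha(\rho_t)|^2$.

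\textbf{Step 2: controlling the weighted mean.} By Jensen's inequality with weights $\omega^\alpha_J/\lVert\omega^\alpha_J\rVert_{L^1}$, and using $\omega^\alpha_J\le e^{-\alpha\underline J}$,
\begin{equation*}
|\mathbb{E}(\rho_t)-m_\alpha(\rho_t)|^2 \le \frac{2e^{-\alpha\underline J}}{\lVert\omega^\alpha_J\rVert_{L^1(\rho_t)}}\,V(\rho_t).
\end{equation*}
(The normalization constants are to be matched to the stated rate $q$.) If $\lVert\omega^\alpha_J\rVert_{L^1(\rho_t)}\ge b_0/2$, this yields $\dot V\le -qV$ with $q$ as in the statement, up to bookkeeping of constants.

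\textbf{Step 3 (main obstacle): lower bound on the mass.} We compute $\frac{d}{dt}\int \omega^\alpha_J\,d\rho_t$ by Itô's formula. Applying (A2) to $\nabla\omega^\alpha_J$ and $\nabla^2\omega^\alpha_J$ (as in \cite[Lemma 4.1]{PinnauTotzeckTseMartin2017}), the drift and diffusion contributions are bounded by multiples of $\alpha e^{-\alpha\underline J}(\lambda c_J + d c_J\sigma^2)\int|x-m_\alpha|^2 d\rho_t$, which in turn is bounded by a multiple of $V(\rho_t)$ through Step 2. This gives
\begin{equation*}
\frac{d}{dt}\lVert\omega^\alpha_J\rVert_{L^1(\rho_t)}\ge -\frac{b_1}{\lVert\omega^\alpha_J\rVert_{L^1(\rho_t)}}\, V(\rho_t)
\end{equation*}
or a similar inequality. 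We then run a continuity/bootstrap argument. Let $T^*$ be the first time at which the mass drops to $b_0/2$. On $[0,T^*)$, Step 2 gives $V\le V_0e^{-qt}$. Integrating the mass inequality then gives a total loss of at most $\frac{2b_1 V_0}{q\,b_0}$. Condition $(\star\star)$ says precisely $q\ge V_0/b_0^2$, and combined with $(\star)$ this makes the total loss at most $\frac{2b_1 V_0 b_0^2}{V_0 b_0} = 2b_1 b_0$. The exact form of this bound is a constant-tracking step and may need adjustment, with $b_1<3/4$ being the condition that keeps the loss strictly below $b_0/2$. Hence $T^*=\infty$. This step is delicate because the constants in $(\star)$ and in the definition of $b_1$ must match exactly; I would first redo the Hessian estimate of $\omega^\alpha_J$ carefully.

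\textbf{Step 4: consensus point.} Given exponential decay of $V$, Step 2 gives $|\mathbb{E}(\rho_t)-m_\alpha(\rho_t)|\lesssim e^{-qt/2}$. Therefore $\frac{d}{dt}\mathbb{E}(\rho_t)$ is integrable on $[0,\infty)$, so $\mathbb{E}(\rho_t)\to x^*_\alpha$ by the Cauchy criterion, and $m_\alpha(\rho_t)\to x^*_\alpha$ as well.
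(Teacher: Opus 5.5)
There is a genuine gap: the two steps you defer as ``bookkeeping'' are exactly where the argument fails under the stated hypotheses. Repairing them needs two specific ideas, not constant-matching. The paper quotes this result from \cite{carrillo2018analytical} without proof. Your overall architecture (variance inequality, lower bound on the mass $M_t:=\lVert\omega_J^\alpha\rVert_{L^1(\rho_t)}$, continuity argument) is the right one.

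\textbf{The rate.} Your Jensen bound in Step 2 only gives $\int|x-m_\alpha(\rho_t)|^2\,d\rho_t=2V+|\mathbb{E}(\rho_t)-m_\alpha(\rho_t)|^2\le 2V(1+e^{-\alpha\underline J}/M_t)$. So on $\{M_t\ge b_0/2\}$ you obtain the rate $q-d\sigma^2$, not $q$. This can even be negative while $(\star)$ and $(\star\star)$ hold. Since $(\star\star)$ is precisely $q\ge V_0/b_0^2$ with no slack, it gives no control of $\int_0^\infty V\,dt$ at the weaker rate, and Step 3 cannot close. The missing estimate is the sharp one. Set $w:=\omega_J^\alpha/M_t$ and write $m_\alpha(\rho_t)-\mathbb{E}(\rho_t)=\int(x-\mathbb{E}(\rho_t))(w-1)\,d\rho_t$. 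Apply Cauchy--Schwarz together with $\int(w-1)^2\,d\rho_t=\int w^2\,d\rho_t-1\le e^{-\alpha\underline J}/M_t-1$. This yields
\[
\int|x-m_\alpha(\rho_t)|^2\,d\rho_t\le\frac{2e^{-\alpha\underline J}}{M_t}\,V(\rho_t),\qquad\text{hence}\qquad \frac{d}{dt}V(\rho_t)\le-\Big(2\lambda-\frac{d\sigma^2e^{-\alpha\underline J}}{M_t}\Big)V(\rho_t),
\]
which is exactly $-qV$ once $M_t\ge b_0/2$.

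\textbf{The mass bound.} Your inequality $\frac{d}{dt}M_t\ge-b_1V/M_t$, integrated linearly with $M_t\ge b_0/2$ in the denominator, gives a loss of at most $2b_1b_0$, as you compute. That is below $b_0/2$ only when $b_1<1/4$, so $(\star)$ does not suffice. Even with the sharp constant, linear integration only reaches $b_1<1/2$. The threshold $3/4$ comes from tracking $M_t^2$ instead. Three ingredients give $\frac{d}{dt}M_t^2\ge-b_1V(\rho_t)$ with exactly the stated $b_1$:
\begin{enumerate}
\item[(a)] Use the identity $\int(x-m_\alpha(\rho_t))\,\omega_J^\alpha\,d\rho_t=0$ to replace $\nabla J(x)$ by $\nabla J(x)-\nabla J(m_\alpha(\rho_t))$ in the drift contribution. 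Then (A2) gives the lower bound $-\alpha\lambda c_Je^{-\alpha\underline J}\int|x-m_\alpha|^2\,d\rho_t$. It is this weighted cancellation, not a bound on $\nabla^2\omega_J^\alpha$, that makes the drift term quadratic in $x-m_\alpha$.
\item[(b)] Drop the nonnegative part $\alpha^2|\nabla J|^2\omega_J^\alpha$ of $\Delta\omega_J^\alpha$, and bound $\Delta J\le dc_J$.
\item[(c)] Apply the sharp moment bound above.
\end{enumerate}
On $[0,T^*)$ this gives $M_t^2\ge b_0^2-b_1V_0/q\ge(1-b_1)b_0^2>b_0^2/4$, using $(\star\star)$ and then $(\star)$. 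That closes the continuity argument, and Step 4 is fine as written.
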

\vspace{0.5em}
From Theorem \ref{thm:cbo_variance}, we have the desired exponential contraction of the variance on the level of the mean-field law. However, we remark that Theorem \ref{thm:cbo_variance} on its own only guarantees consensus formation at $x^*_\alpha$; a further application of the Laplace principle \cite[Theorem 4.2]{carrillo2018analytical} is needed to show that $x^*_\alpha$ can additionally be placed within any prescribed distance of the global minimizer $x^*$ by choosing $\alpha$ sufficiently large. This is exactly the aforementioned two-step procedure, in which one first takes $t$ large to achieve the consensus point, and then a further limit of the inverse temperature to infinity is needed to recover the global minimizer.

\subsection{Convergence Guarantees for CBO with Memory}\label{sec:cbo_memory}
Following \cite{Huang_2023}, the mean-field (McKean) description of \eqref{eqn:cbo_mem} is obtained by passing to the large particle limit $N \rightarrow \infty$, in which the empirical measure $\hat{\rho}_{Y,t}^N$ is replaced by the marginal law $\rho_{\overline{Y},t}$ of the process $\overline{Y}_t$ itself. This yields 
\begin{equation}\label{eqn:cbo_mem_mf}
\begin{split}
d\overline{X}_t &= \overline{V}_t \ \mathrm{d}t, \\
d\overline{Y}_t &= \kappa \cdot (\overline{X}_t - \overline{Y}_t) \cdot S^{\beta,\theta}(\overline{X}_t, \overline{Y}_t) \ \mathrm{d}t, \\
m \ d\overline{V}_t &= -\gamma \overline{V}_t \ \mathrm{d}t + \lambda_1 \cdot (\overline{Y}_t - \overline{X}_t) \ \mathrm{d}t + \lambda_2 \cdot \big(m_{\alpha}(\rho_{\overline{Y},t}) - \overline{X}_t\big) \ \mathrm{d}t \\
&\quad + \sigma_1 \cdot D(\overline{Y}_t - \overline{X}_t) \ \mathrm{d}B^{1}_t + \sigma_2 \cdot D\big(m_{\alpha}(\rho_{\overline{Y},t}) - \overline{X}_t\big) \ \mathrm{d}B^{2}_t.
\end{split}
\end{equation}
Here, the process is initialized according to $(\overline{X}_0, \overline{Y}_0, \overline{V}_0) \sim f_0$, $f_t = \mathrm{Law}(\overline{X}_t, \overline{Y}_t, \overline{V}_t)$, $\rho_{\overline{Y},t}$ denotes the marginal law of $\overline{Y}_t$ under $f_t$, and $m_\alpha(\rho_{\overline{Y},t})$ is defined exactly as in \eqref{eqn:laplace} with $\rho$ replaced by $\rho_{\overline{Y},t}$. The bar notation here denotes the fact that the particles are mean-field particles. 
\newline
\newline
We utilize the energy functional $\mathcal{H}(t)$ from \cite{Huang_2023}, which is written below. 
\begin{equation}\label{eqn:energy}
\begin{split}
\mathcal{H}(t) &= \left(\frac{\gamma}{2m}\right)^2 |\overline{X}_t - \mathbb{E}[\overline{X}_t]|^2 + \frac{3}{2}|\overline{V}_t|^2 + \frac{1}{2} \cdot \left(\frac{3\lambda_1}{m} + \frac{\gamma^2}{m^2}\right) \cdot |\overline{X}_t - \overline{Y}_t|^2 \\
&+ \frac{\gamma}{2m} \langle \overline{X}_t - \mathbb{E}[\overline{X}_t], \overline{V}_t \rangle + \frac{\gamma}{m} \langle \overline{X}_t - \overline{Y}_t, \overline{V}_t \rangle
\end{split}
\end{equation}
In \cite{Huang_2023}, it was shown that, under suitable conditions, $\mathbb{E}[\mathcal{H}(t)]$ decays over time. The first three terms in $\mathcal{H}(t)$ capture the variance, size of velocity, and deviation from the personal best. The last two terms are needed for a hypocoercivity approach: it is thanks to the mixing of position and velocity variables in the last two terms that decay of the first three terms is obtained. Convergence to the consensus point for CBO with memory relies on a regularity and tractability assumption on the objective $J : \mathbb{R}^d \rightarrow \mathbb{R}$, which we state in full below: 
\begin{enumerate}
    \item[(B1)] $\underline{J}:= \inf_{x \in \mathbb{R}^d} J(x)= 0$;
    \item[(B2)] $J \in \mathcal{C}^2(\mathbb{R}^d)$ with $\lVert \nabla^2 J \rVert_{\infty} \leq c_J$ for some constant $c_J > 0$;
    \item[(B3)] there exists a constant $L_J > 0$ such that
    $$|J(x) - J(x')| \leq L_J(|x|+|x'|)|x-x'| \qquad \forall \ x,x' \in \mathbb{R}^d;$$
    \item[(B4)] either $\overline{J} := \sup_{x \in \mathbb{R}^d} J(x) < \infty$, or there exist constants $C_J, R > 0$ such that
    $$J(x) - \underline{J} \geq C_J |x|^2 \qquad \forall \ x \in \mathbb{R}^d \text{ with } |x| \geq R.$$
\end{enumerate}
The first two assumptions are the same as (A1)-(A2) used for the first-order model. The additional assumptions are (A3) to account for the well-posedness of the model with the memory contributions, and (A4) purely for ease in the theoretical analysis. Under Assumptions (B1)-(B4), we recall \cite[Theorem 3.3]{Huang_2023}, which proves exponential convergence to the consensus point for CBO with memory effect. 
\newline 
\newline 
For (B1), we remark that $\underline{J} = 0$ is not a true restriction and mainly for convenience in the subsequent computations, since one can translate the objective function vertically without changing the location of the minimizer. We additionally note that this shift is not possible from the theorem inherited for the case of the first-order CBO model.
\begin{theorem}[{\cite[Theorem 3.3]{Huang_2023}}]\label{thm:cbo_mem_cv}
Let $J$ satisfy Assumptions (B1)-(B4), shift $J$ such that $\underline{J} = \inf J = 0$, and assume $0 < \theta < 2$. Moreover, assume the well preparation of the initial conditions $f_0$ in the sense that
\begin{itemize}
    \item[(H1)] $$\mu_1 := \frac{(\lambda_1 + 2\lambda_2) \cdot \gamma}{(2m)^2} - \left(\frac{9\lambda_2^2}{\gamma m} + \frac{3\sigma_2^2}{m^2} + \frac{3\lambda_1\gamma}{4m^2}\right) \cdot \frac{12}{\mathbb{E}_{f_0}[\exp(-\alpha \cdot J(\overline{Y}_0)]} > 0$$
    \item[(H2)] $$\mu_2 := \frac{(\lambda_1 + \lambda_2) \cdot \gamma}{m^2} + \kappa\theta \cdot \left(\frac{3\lambda_1}{m} + \frac{\gamma^2}{m^2}\right) - \frac{8\kappa^2\gamma}{m} - \frac{\lambda_2^2\gamma}{2m^2\lambda_1} - \frac{3\sigma_1^2}{2m^2} - \left(\frac{9\lambda_2^2}{\gamma m} + \frac{3\sigma_2^2}{m^2}\right)$$
    $$-\left(\frac{9\lambda_2^2}{\gamma m} + \frac{3\sigma_2^2}{m^2} + \frac{3\lambda_1\gamma}{(2m)^2} \right) \cdot \frac{24}{\mathbb{E}_{f_0}[\exp(-\alpha \cdot J(\overline{Y}_0)]} > 0$$
    \item[(H3)] $$\mu_3 := \left(\frac{\alpha \kappa m}{\lambda_1 \chi} \cdot (C_J + 2\alpha^2) + \frac{24C_J^2\kappa}{\alpha \chi^3}\right) \cdot \frac{\mathbb{E}_{f_0}[\mathcal{H}(0)]}{\mathbb{E}_{f_0}[\exp(-\alpha \cdot J(\overline{Y}_0))]} + \frac{6\kappa}{\alpha \chi} \cdot \frac{\mathbb{E}_{f_0}[|\nabla J(\overline{X}_0)|^2]}{\mathbb{E}_{f_0}[\exp(-\alpha \cdot J(\overline{Y}_0))]} < \frac{3}{32}$$
\end{itemize}
where
$$\chi = \frac{2}{5} \cdot \frac{\min\left\{\frac{\gamma}{2m}, \mu_1, \mu_2\right\}}{\left(\left(\frac{\gamma}{2m}\right)^2 + 1 + \frac{3\lambda_1}{m} + 2 \cdot \left(\frac{\gamma}{m}\right)^2\right)}$$
Then $\mathbb{E}[\mathcal{H}(t)]$ with $\mathcal{H}(t)$ as defined in Equation \eqref{eqn:energy} converges exponentially fast with rate $\chi$ to $0$ as $t \rightarrow \infty$. Moreover, there exists some consensus point $\hat{x}_{\alpha}$, which may depend on $\alpha$ and $f_0$ such that $\mathbb{E}[\overline{X}_t] \rightarrow \hat{x}_{\alpha}$ and $m_{\alpha}(\hat{\rho}_{Y,t}^N) \rightarrow \hat{x}_{\alpha}$ exponentially fast with rate $\chi/2$ as $t \rightarrow \infty$. Eventually, for any given accuracy $\epsilon > 0$, there exists $\alpha_0 > 0$ such that for all $\alpha > \alpha_0$, $\hat{x}_{\alpha}$ satisfies 
$$J(\hat{x}_{\alpha}) - \underline{J} \leq \epsilon.$$
\end{theorem}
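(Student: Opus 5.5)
This theorem is quoted from \cite[Theorem 3.3]{Huang_2023}, so the plan below reconstructs the structure of that proof rather than building a new one. The proof is a hypocoercive Lyapunov argument for $\mathbb{E}[\mathcal{H}(t)]$, combined with a lower bound on $\mathbb{E}[\exp(-\alpha J(\overline{Y}_t))]$ that keeps the weighted mean $m_\alpha(\rho_{\overline{Y},t})$ under control.

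\textbf{Step 1: differentiate the energy.} Apply It\^o's formula to each term of \eqref{eqn:energy} along \eqref{eqn:cbo_mem_mf} and take expectations.
\begin{itemize}
\item The friction $-\gamma\overline{V}_t/m$ gives negative $|\overline{V}_t|^2$ terms.
\item The cross terms $\frac{\gamma}{2m}\langle \overline{X}_t-\mathbb{E}\overline{X}_t,\overline{V}_t\rangle$ and $\frac{\gamma}{m}\langle \overline{X}_t-\overline{Y}_t,\overline{V}_t\rangle$ turn the drifts $\lambda_2(m_\alpha-\overline{X}_t)$ and $\lambda_1(\overline{Y}_t-\overline{X}_t)$ into negative multiples of $|\overline{X}_t-\mathbb{E}\overline{X}_t|^2$ and $|\overline{X}_t-\overline{Y}_t|^2$.
\item The memory equation contributes $-\kappa S^{\beta,\theta}|\overline{X}_t-\overline{Y}_t|^2$. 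Since $0<\theta<2$ gives $S^{\beta,\theta}\ge\theta$, this term is at most $-\kappa\theta|\overline{X}_t-\overline{Y}_t|^2$, which is the $\kappa\theta(\cdot)$ term in $\mu_2$.
\end{itemize}
The It\^o corrections produce $\sigma_1^2|\overline{Y}_t-\overline{X}_t|^2/m^2$ and $\sigma_2^2|m_\alpha-\overline{X}_t|^2/m^2$. These are handled with Young's inequality, which yields the $\lambda_2^2/\lambda_1$, $\lambda_2^2/(\gamma m)$ and $\kappa^2\gamma/m$ terms appearing in (H1)--(H2).

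\textbf{Step 2: control the weighted mean.} The error term $|\mathbb{E}\overline{X}_t-m_\alpha(\rho_{\overline{Y},t})|^2$ is estimated by Jensen's inequality:
\[
\mathbb{E}|\overline{Y}_t-\mathbb{E}\overline{X}_t|^2\,e^{-\alpha\underline J}\big/\mathbb{E}[e^{-\alpha J(\overline{Y}_t)}].
\]
This is where the factor $1/\mathbb{E}[e^{-\alpha J(\overline{Y}_t)}]$ enters. Suppose $\mathbb{E}[e^{-\alpha J(\overline{Y}_t)}]\ge\frac12\mathbb{E}[e^{-\alpha J(\overline{Y}_0)}]$ on an interval $[0,T]$. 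Then (H1)--(H2) give $\frac{d}{dt}\mathbb{E}\mathcal{H}\le -\min\{\frac{\gamma}{2m},\mu_1,\mu_2\}\,\mathcal{D}(t)$, where $\mathcal{D}$ is the dissipation quadratic form. Comparing $\mathcal{H}$ with $\mathcal{D}$ through the norm-equivalence constant (the denominator of $\chi$, with the factor $2/5$ absorbing the cross terms) gives $\mathbb{E}\mathcal{H}(t)\le \mathbb{E}\mathcal{H}(0)e^{-\chi t}$ on $[0,T]$.

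\textbf{Step 3: continuation argument (main obstacle).} It remains to close the bootstrap, showing $T=\infty$. Differentiate $\mathbb{E}[e^{-\alpha J(\overline{Y}_t)}]$ using $\dot{\overline{Y}}_t=\kappa(\overline{X}_t-\overline{Y}_t)S^{\beta,\theta}$ and bound
\[
|\nabla J(\overline{Y}_t)|\le |\nabla J(\overline{X}_t)|+c_J|\overline{X}_t-\overline{Y}_t|.
\]
The first term is propagated from $\mathbb{E}|\nabla J(\overline{X}_0)|^2$ by a Gr\"onwall estimate along $\overline{V}_t$. Assumptions (B3)--(B4), with $C_J$, bound the growth of $e^{-\alpha J}$ and of its derivatives in terms of $\mathcal{H}$. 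Integrating the exponentially decaying $\sqrt{\mathbb{E}\mathcal{H}}$ contributions gives factors $1/\chi$ and $1/\chi^3$, which yields exactly the quantity $\mu_3$. Condition (H3), $\mu_3<3/32$, then keeps $\mathbb{E}[e^{-\alpha J(\overline{Y}_t)}]$ strictly above half its initial value, so $T$ cannot be finite. Tracking the constants here is the delicate part.

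\textbf{Step 4: consensus and Laplace principle.} Since $\left|\frac{d}{dt}\mathbb{E}\overline{X}_t\right|=|\mathbb{E}\overline{V}_t|\lesssim e^{-\chi t/2}$, the mean $\mathbb{E}\overline{X}_t$ is Cauchy and converges to some $\hat x_\alpha$. Step 2 then gives $m_\alpha\to\hat x_\alpha$ at rate $\chi/2$. Finally, the Laplace principle applied to the limiting law $\delta_{\hat x_\alpha}$, together with the lower bound on $\mathbb{E}[e^{-\alpha J}]$ from Step 3, gives $J(\hat x_\alpha)\le -\frac1\alpha\log\big(\tfrac12\mathbb{E}e^{-\alpha J(\overline{Y}_0)}\big)$. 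By the Laplace principle the right-hand side tends to $\underline J$ as $\alpha\to\infty$, so $J(\hat x_\alpha)-\underline J\le\epsilon$ for all $\alpha$ large enough.
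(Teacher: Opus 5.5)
Your outline is correct and follows the same route as the proof in the cited source \cite{Huang_2023}; the paper itself only quotes this theorem. That route is an It\^o/hypocoercivity estimate for $\mathbb{E}[\mathcal{H}(t)]$ valid while $\mathbb{E}[e^{-\alpha J(\overline{Y}_t)}]\ge\frac12\mathbb{E}[e^{-\alpha J(\overline{Y}_0)}]$, closure of this bootstrap through (H3), and then consensus plus the Laplace principle.

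Two small corrections. First, $S^{\beta,\theta}\ge\theta$ holds for every $\theta\ge0$; what $\theta<2$ actually gives is $S^{\beta,\theta}\le 2+\theta<4$, and this is how Young's inequality on $\frac{\gamma\kappa}{m}S^{\beta,\theta}\langle\overline{X}_t-\overline{Y}_t,\overline{V}_t\rangle$ produces the $8\kappa^2\gamma/m$ term. Second, in Step 4 the limit $\mathbb{E}[e^{-\alpha J(\overline{Y}_t)}]\to e^{-\alpha J(\hat{x}_\alpha)}$ comes from weak convergence of $\mathrm{Law}(\overline{Y}_t)$ to $\delta_{\hat{x}_\alpha}$ (a consequence of $\mathbb{E}[\mathcal{H}(t)]\to0$), not from the Laplace principle; and $-\frac1\alpha\log\mathbb{E}[e^{-\alpha J(\overline{Y}_0)}]\to\underline{J}$ requires the law of $\overline{Y}_0$ to charge every neighborhood of $x^*$, a hypothesis the quoted statement leaves implicit.
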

\vspace{0.5em}
The conditions (H1)-(H3) on the model parameters and initial conditions are the result of a classical argument in stochastic analysis, which ensures that $\mathcal{H}(t)$ decays exponentially fast in expectation. These conditions are quite involved and non-linear in many of the parameters, which complicates the analysis of when these conditions are satisfied. 
\newline 
\newline
As shown earlier, this convergence result is for the time-continuous mean-field description of CBO with memory effect (\ref{eqn:cbo_mem_mf}). Furthermore, as shown in Figure \ref{fig:model_chart}, one can take the limits of some parameters to recover the PSO algorithm. Our main goal is to investigate the effect of the following coupling for $i = 1,2$ on the admissible set of coefficients:
\begin{equation}\label{eqn:coupling}
\lambda_i = \frac{c_i}{2}, \qquad \sigma_i = \frac{c_i}{2\sqrt{3}}.
\end{equation}
Here, $c_i$ correspond to the coupled drift--diffusion strength in the PSO algorithm (\ref{eqn:pso_sde}) and $(\lambda_i, \sigma_i)$ to the drift and diffusion strengths in the CBO dynamics with memory effect (\ref{eqn:cbo_mem_mf}). We analyze each statement in Theorem~\ref{thm:cbo_mem_cv} separately by substituting the PSO parameters for $\lambda_i, \sigma_i$ as defined in \eqref{eqn:coupling} and identifying when they still satisfy conditions (H1)-(H3). This will identify parameter regimes in which the existing theorem guarantees convergence of the regularized mean-field model under the PSO coupling, together with the corresponding decay rate.

\section{PSO Parameter Coupling in First Order CBO} \label{sec3}
We begin with the simpler setting, considering first order dynamics without memory effect and with isotropic diffusion. Following the variance-based analysis in \cite[Section 4]{carrillo2018analytical}, we would like to determine the conditions under which the PSO parameter coupling in \eqref{eqn:coupling} yields a non-empty set of admissible parameters for which the convergence guarantees in Theorem \ref{thm:cbo_variance} still hold. More precisely, we will investigate convergence properties of the isotropic mean-field CBO model (\ref{eqn:cbo_iso}) under the parameter coupling that appears in the PSO algorithm choosing $c_1 = 0$ (since there is no memory effect in (\ref{eqn:cbo_iso})), and where we couple $(\lambda, \sigma) = (\lambda_2, \sigma_2)$ via \eqref{eqn:coupling} for any $c_2 > 0$. Throughout this section, we assume $J$ satisfies (A1)-(A2). 
\begin{theorem}\label{thm:pso_in_cbo}
Let $J$ satisfy Assumptions (A1)-(A2), let $\rho_t$ solve the isotropic mean-field CBO dynamics \eqref{eqn:cbo_iso}, and write $V_0 := V(\rho_0)$ for the initial variance. There exists $V_0^* = V_0^*(\alpha, c_J, d, \rho_0) > 0$ and $\alpha^* = \alpha^*(V_0, c_J, d, J, \rho_0) > 0$ such that if either 
\begin{enumerate}[(i)]
    \item $\alpha > 0$ and $V_0 < V_0^*(\alpha)$, or
    \item $V_0 < 3/2d$ and $\alpha \in (0, \alpha^*(V_0))$,
\end{enumerate}
then the PSO coupling \eqref{eqn:coupling} $\lambda = c_2/2$ and $\sigma = c_2/2\sqrt{3}$ yields a non-empty set of admissible $c_2 > 0$ for which both $(\star)$ and $(\star\star)$ hold, and hence for which the conclusion of Theorem \ref{thm:cbo_variance} is valid.
\end{theorem}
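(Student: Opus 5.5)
The plan is to substitute the PSO coupling \eqref{eqn:coupling} into $(\star)$ and $(\star\star)$ of Theorem~\ref{thm:cbo_variance}, which turns both conditions into explicit polynomial inequalities in the single variable $c := c_2 > 0$. With $\lambda = c/2$ and $\sigma^2 = c^2/12$ we get
\[
b_1 = 2\alpha e^{-2\alpha \underline{J}} c_J\Big(\frac{d c^2}{12} + c\Big),
\]
so $(\star)$ becomes $\frac{d}{12}c^2 + c < \frac{3}{8\alpha c_J}e^{2\alpha\underline{J}}$. The left side is increasing in $c>0$, so this is equivalent to $c \in (0, c_{\max}(\alpha))$, where $c_{\max}(\alpha)>0$ is the positive root of the corresponding quadratic. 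Similarly, with $\varepsilon := e^{-\alpha\underline{J}}$, condition $(\star\star)$ becomes the concave quadratic inequality
\[
P(c) := -\frac{d\, b_0 \varepsilon}{6}\,c^2 + b_0^2\, c - V_0 \;\ge\; 0 .
\]
This has solutions iff its discriminant is nonnegative, that is, iff $V_0 \le \frac{3 b_0^3}{2 d \varepsilon}$. In that case the solution set is $[c_-, c_+]$ with $c_\pm = \frac{3}{d b_0 \varepsilon}\big(b_0^2 \pm \sqrt{b_0^4 - \tfrac{2d}{3} b_0\varepsilon V_0}\big)$, and $0< c_- \le c_+$. The admissible set is then $[c_-,c_+]\cap(0,c_{\max})$, which is nonempty iff the discriminant is nonnegative and $c_- < c_{\max}$. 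The lower bound on $q$ in Theorem~\ref{thm:cbo_variance} is just a rearrangement of $(\star\star)$, so nothing further needs checking.

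For case (i), fix $\alpha>0$. Then $b_0$, $\varepsilon$ and $c_{\max}(\alpha)$ are fixed positive numbers; $b_0>0$ because $\omega_J^\alpha>0$ and $\rho_0$ is a probability measure. Rationalizing gives $c_- = \frac{2V_0}{b_0^2 + \sqrt{b_0^4 - \frac{2d}{3}b_0\varepsilon V_0}} \le \frac{2V_0}{b_0^2}$. Hence it suffices to take
\[
V_0^* := \min\Big\{\frac{3b_0^3}{2d\varepsilon},\ \frac{b_0^2 c_{\max}(\alpha)}{2}\Big\},
\]
where strict inequality $V_0<V_0^*$ yields both the real roots and $c_- < c_{\max}$. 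This $V_0^*$ depends on $\alpha, c_J, d$ and on $\rho_0$ through $b_0$ (and on $\underline{J}$), as claimed. A small subtlety is that $b_0$ depends on $\rho_0$ and hence on $V_0$. I would treat $V_0^*$ as a threshold expressed through $b_0$, exactly as the statement's dependence $V_0^*(\alpha,c_J,d,\rho_0)$ allows.

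For case (ii), I would send $\alpha \to 0^+$. Since $0<\omega^\alpha_J\le 1$ (by (A1)) and $\omega^\alpha_J\to 1$ pointwise, dominated convergence gives $b_0(\alpha)\to 1$; likewise $\varepsilon\to1$, and $c_{\max}(\alpha)\to\infty$ because $\frac{3}{8\alpha c_J}e^{2\alpha\underline J}\to\infty$. Consequently the discriminant threshold $\frac{3b_0^3}{2d\varepsilon}\to \frac{3}{2d}$, while $c_-(\alpha)$ stays bounded: it tends to the smaller root of the limiting quadratic $-\frac d6 c^2 + c - V_0$, which is real precisely because $V_0<3/(2d)$. By continuity in $\alpha$ of $b_0,\varepsilon,c_-,c_{\max}$, there is $\alpha^*>0$ such that for all $\alpha\in(0,\alpha^*)$ both $V_0 < \frac{3b_0^3}{2d\varepsilon}$ and $c_-(\alpha)<c_{\max}(\alpha)$ hold. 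This $\alpha^*$ depends on $V_0$ and on $\rho_0, J, c_J, d$. One could make $\alpha^*$ explicit using bounds such as $b_0 \ge e^{-\alpha \int J\,d\rho_0}$ (Jensen) when $J\in L^1(\rho_0)$, but the continuity argument suffices.

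The main obstacle is case (ii): controlling the $\alpha$-dependence of $b_0 = \lVert\omega_J^\alpha\rVert_{L^1(\rho_0)}$ and ensuring that the strict inequalities survive uniformly on an interval $(0,\alpha^*)$ rather than only in the limit. I would handle it by establishing continuity of $\alpha\mapsto b_0(\alpha)$ on $[0,\infty)$ via dominated convergence, which needs no integrability of $J$. I would then apply the intermediate-value/openness argument to the two strict inequalities, whose limiting versions at $\alpha=0$ hold with slack: $V_0<3/(2d)$, and $c_{\max}=\infty$ against a finite $c_-$.
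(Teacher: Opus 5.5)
Your proposal is correct and follows the paper's proof essentially step for step. You use the same substitution of the coupling into $(\star)$ and $(\star\star)$, the same two quadratics with the discriminant condition $V_0\le 3b_0^3e^{\alpha\underline{J}}/(2d)$, and in case (ii) the same $\alpha\to0^+$ argument: $b_0(\alpha)\to1$ by dominated convergence, and the upper root of $(\star)$ blows up while the smaller root of $(\star\star)$ stays bounded. The only variation is in case (i), where your rationalized bound $c_-\le 2V_0/b_0^2$ gives an explicit but slightly conservative threshold $V_0^*=\min\{3b_0^3/(2d\varepsilon),\, b_0^2c_{\max}(\alpha)/2\}$, whereas the paper obtains its threshold implicitly from monotonicity and continuity of $V_0\mapsto c_{2,-}^{**}(\alpha,V_0)$.
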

\begin{proof}
For the first condition $(\star)$, we can solve as follows.
\begin{align*} 
2\alpha e^{-2\alpha \underline{J}} \cdot \left(c_Jd \left(\frac{c_2}{2\sqrt{3}}\right)^2 + 2 \cdot \frac{c_2}{2} \cdot c_J\right) < \frac{3}{4} &\implies 
c_Jd \cdot \frac{c_2^2}{12} + c_2 \cdot c_J < \frac{3e^{2\alpha \underline{J}}}{8\alpha} \\ 
&\implies
c_2^2 \cdot \frac{c_Jd}{12} + c_2 \cdot c_J - \frac{3e^{2\alpha \underline{J}}}{8\alpha} < 0.
\end{align*}
Solving for the roots of this upward-facing quadratic in $c_2$ (since $c_Jd > 0$) and simplifying,
\begin{align*}
c^*_{2,\pm} &= \frac{-c_J \pm \sqrt{c_J^2 - 4 \cdot \left(\frac{c_Jd}{12}\right) \cdot \left(\frac{-3e^{2\alpha \underline{J}}}{8\alpha}\right)}}{2 \cdot \left(\frac{c_Jd}{12}\right)} = \frac{6}{d}\left(\pm\sqrt{1 + \frac{d}{8c_J\alpha}\,e^{2\alpha \underline{J}}} - 1\right).
\end{align*}
Since the discriminant is positive, it holds that $c_{2,-}^* < 0$ and $c_{2,+}^* > 0$; write
\begin{equation}\label{eqn:c2plusstar}
c_{2,+}^*(\alpha) := \frac{6}{d}\left(\sqrt{1 + \frac{d}{8c_J\alpha}\,e^{2\alpha \underline{J}}} - 1\right) > 0,
\end{equation}
a quantity depending on $\alpha, c_J, d, \underline J$ but \emph{not} on $V_0$. The first condition $(\star)$ is satisfied exactly when $c_2 \in I_1(\alpha) := [0,c_{2,+}^*(\alpha))$.
\newline
\newline
We now substitute the coupling into the second condition $(\star\star)$ as follows.
\begin{align*}
2 \cdot \left(\frac{c_2}{2}\right)b_0^2 - V_0 - 2d\left(\frac{c_2}{2\sqrt{3}}\right)^2 b_0 \cdot e^{-\alpha \underline{J}} \geq 0 &\implies
c_2 \cdot b_0^2 - V_0 - \frac{c_2^2d}{6} \cdot b_0 \cdot e^{-\alpha \underline{J}} \geq 0 \\
&\implies c_2^2 \cdot \left(\frac{d\cdot b_0 \cdot e^{-\alpha \underline{J}}}{6}\right)  - c_2 \cdot b_0^2 + V_0 \leq 0.
\end{align*}
We once again use the quadratic formula since this is an upward facing quadratic.
\begin{align*}
c^{**}_{2,\pm}(\alpha, V_0) &= \frac{b_0^2 \pm \sqrt{b_0^4 - 4 \cdot \left(\frac{d\cdot b_0 \cdot e^{-\alpha \underline{J}}}{6}\right) \cdot V_0}}{2 \cdot \left(\frac{d\cdot b_0 \cdot e^{-\alpha \underline{J}}}{6}\right)}
= \frac{3b_0^2 \pm 3\sqrt{b_0^4 - \frac{2}{3} \cdot d\cdot b_0 \cdot e^{-\alpha \underline{J}} \cdot V_0}}{d\cdot b_0 \cdot e^{-\alpha \underline{J}}}.
\end{align*}
To ensure $(\star\star)$ holds, we need the quadratic to admit roots, and thus that $c^{**}_{2,\pm}$ is real. In other words, the discriminant must be non-negative, i.e.
\begin{equation}\label{eqn:V0star1}
V_0 \ \leq\ V_0^{(1)}(\alpha) := \frac{3b_0(\alpha)^3\,e^{\alpha \underline{J}}}{2d}, \tag{$\star\star\star$}
\end{equation}
in which case $c_{2,-}^{**}(\alpha,V_0), c_{2,+}^{**}(\alpha,V_0) > 0$ and the second condition $(\star\star)$ is satisfied exactly for $c_2 \in I_2(\alpha,V_0) := [c_{2,-}^{**}(\alpha,V_0),c_{2,+}^{**}(\alpha,V_0)]$.
\newline
\newline
Both regimes (i) and (ii) now reduce to showing $I_1(\alpha) \cap I_2(\alpha,V_0) \neq \emptyset$, i.e.\ $c_{2,-}^{**}(\alpha,V_0) < c_{2,+}^*(\alpha)$, together with $(\star\star\star)$. We establish each in turn.
\newline
\newline
\emph{Case (i): fixed $\alpha$, small $V_0$.} Fix $\alpha > 0$. On $[0, V_0^{(1)}(\alpha))$ the map
$$V_0 \ \longmapsto\ c_{2,-}^{**}(\alpha,V_0) = \frac{3b_0(\alpha)\,e^{\alpha \underline{J}}}{d}\left(1 - \sqrt{1 - \frac{2dV_0}{3b_0(\alpha)^3}e^{-\alpha \underline{J}}}\right)$$
is continuous, satisfies $c_{2,-}^{**}(\alpha,0) = 0$, and is strictly increasing, since
$$\frac{\partial}{\partial V_0}c_{2,-}^{**}(\alpha,V_0) 
= \left(b_0(\alpha)^2 \sqrt{1 - \frac{2dV_0}{3b_0(\alpha)^3}e^{-\alpha \underline{J}}}\right)^{-1}.$$
As $c_{2,+}^*(\alpha)$ in \eqref{eqn:c2plusstar} does not depend on $V_0$, and $c_{2,-}^{**}(\alpha,0) = 0 < c_{2,+}^*(\alpha)$, monotonicity and continuity give a unique 
$$V_0^*(\alpha) \in (0, V_0^{(1)}(\alpha)] \qquad \text{with} \qquad V_0 < V_0^*(\alpha) \ \Rightarrow \ c_{2,-}^{**}(\alpha,V_0) < c_{2,+}^*(\alpha)$$
Hence, for every $V_0 \in [0, V_0^*(\alpha))$, both $(\star\star\star)$ holds and $c_{2,-}^{**}(\alpha,V_0) < c_{2,+}^*(\alpha)$, so $I_1(\alpha)\cap I_2(\alpha,V_0) \neq \emptyset$ and $c_2 = c_{2,-}^{**}(\alpha,V_0)$ (say) is admissible for both $(\star)$ and $(\star\star)$. This proves (i).
\newline
\newline
\emph{Case (ii): fixed $V_0 < 3/(2d)$, small $\alpha$.} Writing both roots in a common form,
$$c_{2,+}^*(\alpha) = \frac{6}{d}\left(\sqrt{1 + \frac{d}{8c_J\alpha}e^{2\alpha \underline{J}}} - 1\right), \qquad c_{2,-}^{**}(\alpha,V_0) = \frac{3b_0(\alpha)\,e^{\alpha \underline{J}}}{d}\left(1 - \sqrt{1 - \frac{2dV_0}{3b_0(\alpha)^3}e^{-\alpha \underline{J}}}\right),$$
where $b_0(\alpha) = \lVert \omega_J^\alpha \rVert_{L^1(\rho_0)} = \int_{\mathbb{R}^d} e^{-\alpha J} \, \mathrm{d}\rho_0$ depends on $\alpha$ through the mean-field initial data $\rho_0$, and $\underline{J} > 0$ is a fixed constant determined by the objective. We resolve the comparison $c_{2,+}^*(\alpha) > c_{2,-}^{**}(\alpha,V_0)$ by analyzing both sides directly as $\alpha \rightarrow 0^+$, with $V_0$ now held fixed.
\newline
\newline
\emph{Behavior of $c_{2,+}^*$ as $\alpha \rightarrow 0^+$.} Since $\underline{J} > 0$ is fixed, the quantity
$$x(\alpha) := \frac{d}{8c_J\alpha}e^{2\alpha \underline{J}}$$
satisfies $x(\alpha) \rightarrow \infty$ as $\alpha \rightarrow 0^+$ (the factor $1/\alpha$ diverges while $e^{2\alpha \underline{J}} \rightarrow 1$). Using $\sqrt{1+x} - 1 \sim \sqrt{x}$ as $x \rightarrow \infty$,
$$c_{2,+}^*(\alpha) \sim \frac{6}{d}\sqrt{x(\alpha)} = \frac{6}{d}\sqrt{\frac{d}{8c_J\alpha}}\,e^{\alpha \underline{J}} \sim \frac{3}{\sqrt{2c_Jd\,\alpha}} \longrightarrow +\infty \qquad \text{as } \alpha \rightarrow 0^+.$$
\newline
\emph{Behavior of $c_{2,-}^{**}$ as $\alpha \rightarrow 0^+$.} Since $J \geq \underline{J} > 0$ is bounded below and continuous, dominated convergence gives $b_0(\alpha) \rightarrow 1$ as $\alpha \rightarrow 0^+$, and likewise $e^{\pm \alpha \underline{J}} \rightarrow 1$. Consequently,
$$\frac{2dV_0}{3b_0(\alpha)^3}e^{-\alpha \underline{J}} \longrightarrow \frac{2dV_0}{3} \qquad \text{as } \alpha \rightarrow 0^+.$$
By hypothesis $V_0 < 3/(2d)$, so this limit lies in $(0,1)$, and therefore
$$c_{2,-}^{**}(\alpha,V_0) \longrightarrow \frac{3}{d}\left(1 - \sqrt{1 - \frac{2dV_0}{3}}\right) =: L(V_0) < \infty \qquad \text{as } \alpha \rightarrow 0^+.$$
Since $c_{2,+}^*(\alpha) \rightarrow \infty$ while $c_{2,-}^{**}(\alpha,V_0) \rightarrow L(V_0) < \infty$ as $\alpha \rightarrow 0^+$, there exists $\alpha^*(V_0) > 0$ such that $c_{2,+}^*(\alpha) > c_{2,-}^{**}(\alpha,V_0)$ for all $\alpha \in (0,\alpha^*(V_0))$. Moreover, the hypothesis $V_0 < 3/(2d)$ is exactly what guarantees $(\star\star\star)$ to hold in this same limit, since as $\alpha \rightarrow 0^+$ its left-hand side $V_0^{(1)}(\alpha) \to 3/(2d) \cdot 1 > V_0$ while the right-hand side of the original comparison $e^{-\alpha \underline{J}} \rightarrow 1$; so $(\star\star\star)$ holds for all sufficiently small $\alpha$ as well. We shrink $\alpha^*(V_0)$ so that both hold simultaneously on $(0,\alpha^*(V_0))$, which implies $I_1(\alpha) \cap I_2(\alpha,V_0)$ is non-empty for every $\alpha \in (0,\alpha^*(V_0))$, so $c_2 = c_{2,-}^{**}(\alpha,V_0)$ (say) is admissible for both $(\star)$ and $(\star\star)$. This proves (ii).
\end{proof}
\begin{remark}\label{rem:alpha_large}
In fact, $V_0^*(\alpha) \rightarrow 0$ as $\alpha \rightarrow \infty$, and so Theorem \ref{thm:pso_in_cbo}(i) can be understood as an asymptotic convergence result for $\alpha > 0$ with a condition on the initial distribution being sufficiently concentrated. Further, Theorem \ref{thm:cbo_variance} only concerns consensus formation, for which $\alpha$ can be kept moderate, whereas driving the consensus point $x^*_\alpha$ close to the true global minimizer $x^*$ \cite[Theorem 4.2]{carrillo2018analytical} is the step that separately forces $\alpha \gg 1$. This motivates Theorem \ref{thm:pso_in_cbo}(ii), which allows more freedom on the choice of initial data $\rho_0$. 
\end{remark}
 
\section{PSO Parameter Coupling in CBO with Memory} 
\label{sec4}
The first-order model of Section~\ref{sec3} is derived from PSO only after three separate reductions: (1) the personal best is deleted ($c_1 = 0$), (2) the global best is smoothed, and (3) the inertia is sent to zero. To study the coupling \eqref{eqn:coupling} with both inertia and memory present, we now consider the second-order CBO model with memory \eqref{eqn:coupling}, in which both $c_1$ and $c_2$ are positive. In Sections \ref{sec41} - \ref{sec43}, we investigate each of the three parameter constraints (H1)-(H3) from Theorem \ref{thm:cbo_mem_cv}, respectively, and then combine those estimates in Section \ref{sec:proof_existence} to prove the existence of an admissible range for all parameters appearing in (\ref{eqn:cbo_mem_mf}).

\subsection{Statement (H1) for \texorpdfstring{$\mu_1$}{mu-1}}\label{sec41}
We begin by substituting the appropriate equations for $\lambda_i$ and $\sigma_i$ from \eqref{eqn:coupling} into $\mu_1$ to obtain
\begin{equation}\label{eqn:mu1}
\begin{split}
\mu_1 &:= \frac{(\frac{c_1}{2} + c_2) \cdot \gamma}{(2m)^2} - \left(\frac{9}{\gamma m} \cdot \left(\frac{c_2}{2}\right)^2 + \frac{3}{m^2} \cdot \left(\frac{c_2}{2\sqrt{3}}\right)^2 + \frac{3c_1\gamma}{8m^2}\right) \cdot K > 0,\\
K &:= \frac{12}{\mathbb{E}_{f_0}[\exp(-\alpha \cdot J(\overline{Y}_0)]} \geq 12,
\end{split}
\end{equation}
where $f_0$ denotes the initial law for the mean-field system (\ref{eqn:cbo_mem_mf}). Simplifying, we have 
$$\mu_1 = \frac{c_1\gamma}{8m^2} + \frac{c_2\gamma}{4m^2} - \left(\frac{9c_2^2}{4\gamma m} + \frac{c_2^2}{4m^2} + \frac{3c_1\gamma}{8m^2}\right) \cdot K > 0.$$
Multiplying by $8\gamma m^2$, we define $f(c_1, c_2)$ as 
\begin{align*}
f(c_1, c_2) &:= c_1\gamma^2 + 2c_2\gamma^2 - \left(18mc_2^2 + 2\gamma c_2^2 + 3c_1\gamma^2\right) \cdot K = 8\gamma m^2 \cdot \mu_1 > 0.
\end{align*}
Grouping together by $c_1$ and $c_2$, we have the following upon substituting $\gamma = 1-m$ with $m \in (0,1]$:
\begin{align*}
f(c_1,c_2) &= (-18mK - 2 \cdot (1-m) K) \cdot c_2^2 + 2 \cdot (1-m)^2 \cdot c_2 + ((1-m)^2 - 3K \cdot (1-m)^2) \cdot c_1 \\
&= -2K \cdot (8m + 1) \cdot c_2^2 + 2 \cdot (1-m)^2 \cdot c_2 - (1-m)^2 \cdot (3K-1) \cdot c_1.
\end{align*}
In order to ensure there is a solution to the inequality $f(c_1,c_2) > 0$, we check that the discriminant $\mathcal{D}_1$ of this quadratic in $c_2$ is greater than $0$.
\begin{align*}
\mathcal{D}_1 &= (2 \cdot (1-m)^2)^2 - 4 \cdot (-2K \cdot (8m + 1)) \cdot (-(1-m)^2 \cdot (3K-1) \cdot c_1) \\
&= 4 \cdot (1-m)^4 - 8K \cdot (8m+1) \cdot (1-m)^2 \cdot (3K - 1) \cdot c_1 \\
&= 4(1-m)^2 \cdot \left[(1-m)^2 - 2K \cdot (8m + 1) \cdot (3K - 1) \cdot c_1\right] > 0.
\end{align*}
Since $4(1-m)^2 > 0$ for $m \in (0,1)$, the expression in square brackets yields the following upper bound on $c_1$. 
$$c_1 < \frac{(1-m)^2}{2K \cdot (8m + 1) \cdot (3K - 1)} \equiv c_1^*(m,\alpha,J,f_0).$$
The upper bound is positive, which means that there is an admissible set for $c_1$ under this first condition, namely $c_1 \in (0,c_1^*)$. 
\newline 
\newline 
With a bound on $c_1$ to ensure that there are two roots of the quadratic in $c_2$, and the negative coefficient on the leading term to ensure that the parabola opens downward, we can compute the roots to get an admissible range for $c_2$. 
$$f(c_1, c_2) = -2K \cdot (8m+1) \cdot c_2^2 + 2 \cdot (1-m)^2 \cdot c_2 + (1-m)^2 \cdot (1-3K) \cdot c_1 = 0.$$
The quadratic formula then gives us the following values of $c_2$. 
\begin{align*}
c^*_{2,\pm}(c_1) &:= \frac{-2(1-m)^2 \mp \sqrt{(-2(1-m)^2)^2 - 4 \cdot (-2K) \cdot (8m+1) \cdot (1-m)^2 \cdot (1-3K) \cdot c_1}}{2 \cdot (-2K \cdot (8m+1))} \\
&= \frac{(1-m)^2 \pm (1-m) \sqrt{(1-m)^2 - 2K \cdot (8m+1) \cdot (3K-1) \cdot c_1}}{2K \cdot (8m+1)}.
\end{align*}
The above computations immediately yield the following result. 
\begin{lemma}\label{lem:mu1_ranges}
Under the parameter coupling \eqref{eqn:coupling}, condition (H1), i.e.\ $\mu_1 > 0$, is satisfied if and only if
$$0 < c_1 < c_1^* \qquad \text{and} \qquad 0 < c_{2,-}^*(c_1) < c_2 < c_{2,+}^*(c_1),$$
where
\begin{equation}\label{mu1_ranges}
\begin{split}
c_1^* &:= \frac{(1-m)^2}{2K \cdot (8m + 1) \cdot (3K - 1)}\,, \\
c_{2,\pm}^*(c_1) &:= \frac{(1-m)^2 \pm (1-m) \sqrt{(1-m)^2 - 2K \cdot (8m+1) \cdot (3K-1) \cdot c_1}}{2K \cdot (8m+1)} \,.
\end{split}
\end{equation}
\end{lemma}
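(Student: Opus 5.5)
The plan is to reduce (H1) to a sign condition on a concave quadratic in $c_2$ with $c_1$ as a parameter, which the computation preceding the statement already sets up. First I would confirm that the substitution of \eqref{eqn:coupling} into $\mu_1$, followed by multiplication by the positive factor $8\gamma m^2$ (positive for $m\in(0,1)$, $\gamma=1-m>0$), turns $\mu_1>0$ into the equivalent condition $f(c_1,c_2)>0$. Here
$$f(c_1,c_2) = -2K(8m+1)c_2^2 + 2(1-m)^2 c_2 - (1-m)^2(3K-1)c_1 .$$
The only point to check is the grouping of the $c_2^2$ coefficient, $-(18m+2(1-m))K=-2K(8m+1)$, and of the $c_1$ coefficient, $(1-m)^2(1-3K)$. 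Because the multiplier is strictly positive, this is an equivalence, not just an implication, which is what the ``if and only if'' requires.

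Next, fix $c_1>0$ and view $f$ as a quadratic in $c_2$. Its leading coefficient $-2K(8m+1)$ is strictly negative since $K\ge 12$, so the quadratic is positive exactly on the open interval strictly between its real roots. If the discriminant $\mathcal D_1$ is nonpositive, the quadratic is never positive, so the set is empty.

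Then $\mathcal D_1=4(1-m)^2[(1-m)^2-2K(8m+1)(3K-1)c_1]>0$ holds if and only if $c_1<c_1^*$. This uses $3K-1>0$ and $8m+1>0$, so that dividing preserves the direction of the inequality. Whenever this holds, the roots are the $c_{2,\pm}^*(c_1)$ given by the quadratic formula, and I would simplify exactly as in the display above the lemma.

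Finally, I would verify positivity of the lower root, which is the only step that is not pure bookkeeping. By Vieta, the product of the roots is
$$\frac{(1-m)^2(3K-1)c_1}{2K(8m+1)}>0,$$
and their sum is
$$\frac{(1-m)^2}{K(8m+1)}>0.$$
Hence both roots are positive, so $0<c_{2,-}^*<c_{2,+}^*$ and the constraint $c_2>0$ is automatically compatible.

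Conversely, suppose $c_1\ge c_1^*$. Then $\mathcal D_1\le 0$, and a downward parabola with nonpositive discriminant is $\le0$ everywhere, so (H1) fails. Also $c_1>0$ is part of the standing setting, which matches the stated range $0<c_1<c_1^*$.

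I expect no real obstacle. The only delicate points are that $m\in(0,1)$ must be strict so that both $\gamma>0$ and the factor $(1-m)^2$ are nonzero, and that $K\ge12$ is used to fix the signs of $3K-1$ and of the leading coefficient.
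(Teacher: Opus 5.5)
Your proposal is correct and follows the paper's own route: substitute the coupling, multiply by $8\gamma m^2>0$ to obtain the concave quadratic $f(c_1,c_2)$ in $c_2$, read off $c_1<c_1^*$ from $\mathcal{D}_1>0$, and obtain $c_{2,\pm}^*(c_1)$ from the quadratic formula. Three points in your write-up are only implicit in the paper's derivation: the Vieta argument giving $c_{2,-}^*(c_1)>0$, the explicit converse for $c_1\ge c_1^*$, and the remark that $c_1>0$ must be a standing assumption (at $c_1=0$ one still has $\mu_1>0$ for small $c_2>0$).
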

 
\subsection{Statement (H2) for \texorpdfstring{$\mu_2$}{mu-2}}\label{sec42}
We again begin by substituting the appropriate equations for $\lambda_i$ and $\sigma_i$ into $\mu_2$ as follows, noting the presence of $K$ again.
\begin{align*}
\mu_2 &:= \frac{(\frac{c_1}{2} + \frac{c_2}{2}) \cdot \gamma}{m^2} + \kappa\theta \cdot \left(\frac{3c_1}{2m} + \frac{\gamma^2}{m^2}\right) - \frac{8\kappa^2\gamma}{m} - \frac{\left(\frac{c_2}{2}\right)^2\gamma}{2m^2 \cdot \frac{c_1}{2}} - \frac{3}{2m^2} \cdot \left(\frac{c_1}{2\sqrt{3}}\right)^2 \\
&- \left(\frac{9}{\gamma m} \cdot \left(\frac{c_2}{2}\right)^2 + \frac{3}{m^2} \cdot \left(\frac{c_2}{2\sqrt{3}}\right)^2\right) -\left(\frac{9}{\gamma m} \cdot \left(\frac{c_2}{2}\right)^2 + \frac{3}{m^2} \cdot \left(\frac{c_2}{2\sqrt{3}}\right)^2 + \frac{3c_1\gamma}{2(2m)^2} \right) \cdot 2K > 0.
\end{align*}
Simplifying, we have
\begin{align*}\mu_2 = \frac{c_1\gamma}{2m^2} + \frac{c_2\gamma}{2m^2} &+ \kappa\theta \cdot \left(\frac{3c_1}{2m} + \frac{\gamma^2}{m^2}\right) - \frac{8\kappa^2\gamma}{m} - \frac{c_2^2\gamma}{4m^2 \cdot c_1} - \frac{c_1^2}{8m^2} \\
&- \left(\frac{9c_2^2}{4\gamma m} + \frac{c_2^2}{4m^2}\right) -\left(\frac{9c_2^2}{2\gamma m} + \frac{c_2^2}{2m^2} + \frac{3c_1\gamma}{4m^2} \right) \cdot K > 0.
\end{align*}
Scaling by $8\gamma c_1m^2$ across, we define $u(c_1,c_2)$ as follows.
\begin{align*}
u(c_1,c_2) &:= -u_2(c_1) \cdot c_2^2 + u_1(c_1) \cdot c_2 + u_0(c_1) = 8\gamma c_1 m^2 \cdot \mu_2,
\end{align*}
where we group based on the degree of $c_2$, substitute $\gamma = 1-m$, and define
\begin{equation}\label{eqn:u_coeffs}
\begin{split}
u_2(c_1) &= 2 \cdot \left(\gamma^2 + c_1 \cdot (1+2K) \cdot (9m + \gamma)\right),\\ 
u_1(c_1) &= 4c_1 \gamma^2, \\ 
u_0(c_1) &= -2c_1^2\gamma^2(3K-2) + \kappa\theta \gamma \cdot \left(12c_1^2m + 8 \cdot \gamma^2c_1\right) - 64\kappa^2 \gamma^2c_1m - \gamma c_1^3.\\ 
\end{split}
\end{equation}
We can then compute the two roots of the quadratic in $c_2$ of $u(c_1,c_2)$, with $u_0, u_1, u_2$ defined above.
$$c^{**}_{2,\pm} = \frac{u_1 \pm \sqrt{u_1^2 + 4u_0u_2}}{2u_2}$$
To have two real roots, we need the discriminant $\mathcal{D}_2(c_1) := u_1(c_1)^2 + 4u_0(c_1)u_2(c_1)$ to be positive. Since $u_2(c_1) > 0$ and $u_1(c_1) > 0$ for all choices of parameters, it is sufficient to exhibit parameters for which $u_0(c_1) \geq 0$: in that case $\mathcal{D}_2(c_1) \geq u_1(c_1)^2 > 0$ automatically. To this end, we rewrite
\begin{align*}
u_0(c_1) & := -2c_1^2\gamma^2(3K-2) + \kappa\theta\gamma \cdot \left(12c_1^2m + 8 \cdot \gamma^2c_1\right) - 64\kappa^2 \cdot \gamma^2c_1m - \gamma c_1^3 \\
&= c_1\gamma \cdot \left[-2c_1\gamma(3K-2) + \kappa\theta \cdot \left(12c_1m + 8 \cdot \gamma^2\right) - 64\kappa^2 \cdot \gamma m - c_1^2\right] \\
&= c_1\gamma \cdot \left[-c_1^2 + 2c_1 \cdot (-3 \gamma K + 2\gamma + 6\kappa\theta m) + 8\kappa\theta \gamma^2 - 64\kappa^2\gamma m\right] 
\end{align*}
Since $c_1$ and $\gamma$ are both positive, it suffices to determine the roots of the remaining quadratic in $c_1$ to ensure $u_0(c_1) \geq 0$. Computing the roots, we have 
\begin{align*}
c^{**}_{1,\pm} &= \frac{2(2\gamma + 6m\kappa \theta - 3\gamma K) \pm \sqrt{(2(2\gamma + 6m\kappa \theta - 3\gamma K))^2 - 4 \cdot (-1) \cdot (8\gamma^2 \kappa \theta - 64\kappa^2 \gamma m)}}{2} \\
&= (2\gamma + 6m\kappa \theta - 3\gamma K) \pm \sqrt{(2\gamma + 6m\kappa \theta - 3\gamma K)^2 + (8\gamma^2 \kappa \theta - 64\kappa^2 \gamma m)}.
\end{align*}
In order to find a positive $c_1$ such that the inequality $u_0(c_1) \geq 0$ holds, it is sufficient to have
$(8\gamma^2 \kappa \theta - 64\kappa^2 \gamma m) > 0$. Solving for $\theta$, we have 
\begin{equation}\label{mu2_pos}
\theta > \frac{8\kappa m}{\gamma}.
\end{equation}
The condition ~\eqref{mu2_pos} ensures that there is a
positive $c_1$ with $u_0(c_1) \ge 0$ regardless of the sign of
$2\gamma+6m\kappa\theta-3\gamma K$, and we remark that it is a necessary
condition for a positive root when $2\gamma+6m\kappa\theta-3\gamma K<0$. Condition ~\eqref{mu2_pos} also implies
that $c_{1,-}^{**}<0$ in this regime. We conclude that $u_0(c_1)\geq0$
if $c_1\in(0,c_{1,+}^{**}]$, but stress that
Equation~\eqref{mu2_pos} is not sharp for $\mu_2>0$ to hold. In \cite{Huang_2023}, the authors assumed that $0\leq\theta\ll1$, as $S^{\beta,\theta}$ converges to the shifted Heaviside function appearing in the PSO model \eqref{eqn:pso_sde_all} when $\theta\to0$ and $\beta\to\infty$, and their proof of (H2) additionally requires $\theta<2$. 
\newline 
\newline 
Condition \eqref{mu2_pos} therefore has to be read together with that upper bound: the admissible window $8\kappa m/\gamma < \theta < 2$ is non-empty only for $\kappa < \gamma/(4m)$, so the memory strength $\kappa$ must be kept small relative to $\gamma/m$. Condition \eqref{mu2_pos} therefore pushes $\theta$ away from the regime in which the memory-regularized model approximates PSO unless $\kappa$ is taken sufficiently small. It is worth noting that \eqref{mu2_pos} is a \emph{sufficient} condition for $u_0(c_1)\ge0$ at small $c_1$, so its failure does not by itself rule out admissible parameters. However, the obstruction is present at the endpoint $\theta = 0$, and can be read from $\mu_2$.
\begin{proposition}\label{prop:theta_zero}
Under the coupling \eqref{eqn:coupling} with $\theta = 0$, no choice of $c_1, c_2 > 0$ satisfies (H2).
\end{proposition}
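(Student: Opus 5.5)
The plan is to reuse the reduction of (H2) to the sign of the quadratic $u(c_1,c_2) = 8\gamma c_1 m^2\,\mu_2$ from Section~\ref{sec42}, now with $\theta = 0$. I will not try to locate roots in $c_2$. Instead I will bound the maximum of $u(\cdot,\cdot)$ over $c_2$ and show it is strictly negative. Throughout I take $m \in (0,1)$, so $\gamma = 1-m > 0$; for $m=1$ the quantity $\mu_2$ contains $1/\gamma$ and is not defined. Since $8\gamma c_1 m^2 > 0$ for $c_1 > 0$, the sign of $\mu_2$ equals the sign of $u$.

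First, set $\theta = 0$ in \eqref{eqn:u_coeffs}. This gives
\[
u_0(c_1) = -c_1\gamma\left[c_1^2 + 2c_1\gamma(3K-2) + 64\kappa^2\gamma m\right], \qquad u_1(c_1) = 4c_1\gamma^2, \qquad u_2(c_1) = 2\left(\gamma^2 + c_1(1+2K)(9m+\gamma)\right).
\]
Since $u_2(c_1) > 0$, the map $c_2 \mapsto -u_2c_2^2 + u_1c_2 + u_0$ is a downward parabola. Its supremum over all real $c_2$, and hence over $c_2>0$, is
\[
u_0 + \frac{u_1^2}{4u_2} .
\]

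Second, I bound the positive part of this supremum. Dropping the nonnegative term $c_1(1+2K)(9m+\gamma)$ from the denominator gives
\[
\frac{u_1^2}{4u_2} = \frac{2c_1^2\gamma^4}{\gamma^2 + c_1(1+2K)(9m+\gamma)} \le 2c_1^2\gamma^2 .
\]
For the negative part, $K \ge 12$ by \eqref{eqn:mu1}, because $\mathbb{E}_{f_0}[e^{-\alpha J}] \le 1$ when $J \ge 0$ under (B1). Hence $3K - 2 \ge 34$. Combining the two bounds,
\[
u_0 + \frac{u_1^2}{4u_2} \le -2c_1^2\gamma^2(3K-3) - \gamma c_1^3 - 64\kappa^2\gamma^2 c_1 m < 0
\]
for every $c_1 > 0$. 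Therefore $u(c_1,c_2) < 0$, and so $\mu_2 < 0$, for all $c_1, c_2 > 0$, which contradicts (H2).

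There is no genuine obstacle here; the argument is a short computation. The only delicate points are bookkeeping ones. I need to confirm that the $\theta=0$ specialization of $u_0$ is correct; the rewriting of $u_0$ in Section~\ref{sec42} already gives it. I also need to confirm the lower bound $K \ge 12$, which requires $\mathbb{E}_{f_0}[e^{-\alpha J}] \le 1$ and uses $\inf J = 0$. The bound is in fact robust: even without the $-64\kappa^2\gamma^2 c_1 m$ and $-\gamma c_1^3$ terms, the inequality $3K-2 > 1$ alone forces negativity. The conclusion also holds for every $\kappa \ge 0$, $\alpha>0$ and $f_0$, as the statement requires.
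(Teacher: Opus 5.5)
Your proof is correct and is essentially the paper's argument. The paper drops the non-positive terms of $\mu_2$ and maximizes the remaining downward parabola $\frac{\gamma c_2}{2m^2}-\frac{\gamma c_2^2}{4m^2c_1}$ at $c_2=c_1$, obtaining $\mu_2\le\frac{3\gamma c_1}{4m^2}(1-K)<0$; multiplied by $8\gamma c_1m^2$ this is exactly your leading term $-2c_1^2\gamma^2(3K-3)$, and bounding the exact vertex value $u_0+u_1^2/(4u_2)$ also avoids the sign slip in the paper's intermediate line, where $(2\gamma+3K\gamma)c_1$ should read $(2\gamma-3K\gamma)c_1$ for the final display to follow.
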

\begin{proof}
Setting $\theta = 0$ and discarding the non-positive terms from the simplified $\mu_2$ above, we have
\begin{align*} 
\mu_2 &\leq \frac{\gamma c_1}{2m^2} +\frac{\gamma c_2}{2m^2}
-\frac{\gamma c_2^2}{4m^2c_1} -\frac{3K\gamma c_1}{4m^2} \\ 
&\leq \frac{(2\gamma + 3K\gamma)c_1}{4m^2} + \frac{\gamma c_2}{2m^2} - \frac{\gamma c_2^2}{4m^2c_1}
\end{align*}
The middle two terms form a downward parabola in $c_2$ maximized at $c_2 = c_1$, whence
\[
\frac{\gamma c_2}{2m^2} -\frac{\gamma c_2^2}{4m^2c_1}
\leq \frac{\gamma c_1}{4m^2} \qquad \text{for all } c_2 > 0,
\]
and therefore reach a contradiction since since $K \geq 12$ by \eqref{eqn:mu1}.
\[
\mu_2 \leq \frac{\gamma c_1}{2m^2} + \frac{\gamma c_1}{4m^2} - \frac{3K\gamma c_1}{4m^2} = \frac{3\gamma c_1}{4m^2}\,(1-K) < 0
\]
\end{proof}
With the above constraint on $\theta$, we can state the admissible ranges for $\mu_2$.
\begin{lemma}\label{lem:mu2_ranges}
Fix $c_1 > 0$, $0 < \theta < 2$ and let $u_0, u_1, u_2$ be as in \eqref{eqn:u_coeffs}, so that $u_1(c_1) > 0$ and $u_2(c_1) > 0$. Set $\mathcal{D}_2(c_1) := u_1(c_1)^2 + 4u_0(c_1)u_2(c_1)$ and
\begin{equation}\label{mu2_ranges}
c_{2,\pm}^{**}(c_1) := \frac{u_1(c_1) \pm \sqrt{\mathcal{D}_2(c_1)}}{2u_2(c_1)}\,.
\end{equation}
Under the parameter coupling \eqref{eqn:coupling}, there exists $c_2 > 0$ with $\mu_2 > 0$ if and only if $\mathcal{D}_2(c_1) > 0$, and in that case
$$\{c_2 > 0 : \mu_2(c_1, c_2) > 0\} = \mathcal{I}_2(c_1) := \big(\max\{0,\,c_{2,-}^{**}(c_1)\},\ c_{2,+}^{**}(c_1)\big).$$
Moreover $c_{2,-}^{**}(c_1) \leq 0$ if and only if $u_0(c_1) \geq 0$; thus the left endpoint of $\mathcal{I}_2(c_1)$ is $0$ exactly when $u_0(c_1) \geq 0$, and is the strictly positive root $c_{2,-}^{**}(c_1)$ in the remaining regime $u_0(c_1) < 0 < \mathcal{D}_2(c_1)$. Finally, if $\theta > 8\kappa m/\gamma$, then $u_0(c_1) \geq 0$ (hence $\mathcal{D}_2(c_1) > 0$) for every $c_1 \in (0, c_{1,+}^{**}]$, where
$$c_{1,+}^{**}:= (2\gamma + 6m\kappa \theta - 3\gamma K) + \sqrt{(2\gamma + 6m\kappa \theta - 3\gamma K)^2 + (8\gamma^2 \kappa \theta - 64\kappa^2 \gamma m)}\,.$$
\end{lemma}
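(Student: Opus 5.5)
The plan is to reduce everything to the sign of the quadratic $u(c_1,\cdot)$, using the identity $u(c_1,c_2) = 8\gamma c_1 m^2\,\mu_2$ derived above. Since $c_1>0$, $\gamma = 1-m > 0$ and $m>0$, the prefactor $8\gamma c_1 m^2$ is strictly positive, so for fixed $c_1$ we have $\mu_2(c_1,c_2)>0$ if and only if $u(c_1,c_2) = -u_2 c_2^2 + u_1 c_2 + u_0 > 0$. First I would check positivity of the coefficients directly from \eqref{eqn:u_coeffs}. We have $u_1 = 4c_1\gamma^2>0$. For $u_2$, note $K\ge 12$ and $9m+\gamma = 8m+1>0$, so $u_2 = 2(\gamma^2 + c_1(1+2K)(8m+1)) > 0$. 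Hence $c_2 \mapsto u(c_1,c_2)$ is a downward-opening parabola.

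Next comes the elementary characterization. A downward parabola is positive exactly on the open interval between its real roots $c_{2,\pm}^{**}$, and this set is nonempty iff $\mathcal{D}_2>0$. If $\mathcal{D}_2\le 0$, then $u\le 0$ everywhere and no $c_2$ works. If $\mathcal{D}_2>0$, the positive set is $(c_{2,-}^{**},c_{2,+}^{**})$. We also have $c_{2,+}^{**} = (u_1+\sqrt{\mathcal{D}_2})/(2u_2) > 0$ since $u_1>0$, so intersecting with $c_2>0$ gives $\mathcal{I}_2(c_1)$, which is nonempty. This proves the ``if and only if'' and the description of the set.

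For the sign of the left root, I would use the product of roots $c_{2,+}^{**}c_{2,-}^{**} = -u_0/u_2$ when $\mathcal{D}_2\ge 0$. Since $c_{2,+}^{**}>0$ and $u_2>0$, this gives $c_{2,-}^{**}\le 0 \iff u_0\ge 0$. Alternatively, one can argue directly: $c_{2,-}^{**}\le0 \iff u_1 \le \sqrt{u_1^2+4u_0u_2} \iff u_0\ge 0$. This gives the claim about the left endpoint. One small point of care: when $u_0\ge0$, we automatically have $\mathcal{D}_2\ge u_1^2>0$, so the equivalence is meaningful without extra hypotheses.

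Finally, the last claim uses the factorization already computed, $u_0(c_1) = c_1\gamma\, q(c_1)$ with $q(c_1) = -c_1^2 + 2c_1 A + B$, where $A = 2\gamma+6m\kappa\theta-3\gamma K$ and $B = 8\gamma^2\kappa\theta - 64\kappa^2\gamma m$. The condition $\theta > 8\kappa m/\gamma$ is equivalent to $B>0$, assuming $\kappa>0$; the degenerate case $\kappa=0$ should be flagged. With $B>0$, $q$ is a downward parabola with $q(0)=B>0$ and real roots $c_{1,\pm}^{**} = A\pm\sqrt{A^2+B}$, where $c_{1,-}^{**}<0<c_{1,+}^{**}$. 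So $q\ge0$ on $[c_{1,-}^{**},c_{1,+}^{**}]\supset(0,c_{1,+}^{**}]$, and therefore $u_0\ge 0$ there, whence $\mathcal{D}_2>0$.

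The main obstacle is the bookkeeping of the degenerate cases (the boundary $\mathcal{D}_2 = 0$, and $\kappa=0$) rather than any real difficulty. I would handle these by stating the strict inequalities explicitly.
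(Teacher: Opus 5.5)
Your proposal is correct and takes essentially the same route as the paper: divide out the positive prefactor $8\gamma c_1 m^2$, read off the positivity set of the downward parabola $u(c_1,\cdot)$ between its roots using $c_{2,+}^{**}>0$, and obtain the last claim from the factorization $u_0=c_1\gamma\,(-c_1^2+2Ac_1+B)$ with $B>0$. The differences are cosmetic. You use Vieta's product of roots (or squaring) where the paper evaluates $u(c_1,0)=u_0$, and you write out the coefficient positivity and the sign argument for $c_{1,\pm}^{**}$ that the paper leaves to the preceding computation. The $\kappa=0$ case you flag needs no separate treatment: there $A=\gamma(2-3K)<0$ and $B=0$, so $c_{1,+}^{**}=0$ and the final claim holds vacuously.
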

\begin{proof}
Since $8\gamma c_1m^2 > 0$, we have $\mu_2 > 0 \iff u(c_1,c_2) > 0$, and $u(c_1,\cdot)$ is a downward parabola. It is positive somewhere iff $\mathcal{D}_2(c_1)>0$, and thus exactly on $(c_{2,-}^{**}, c_{2,+}^{**})$; intersecting with $\{c_2>0\}$ gives $\mathcal{I}_2(c_1)$, which is non-empty because $c_{2,+}^{**} = (u_1 + \sqrt{\mathcal{D}_2})/(2u_2) > 0$. For the sign of the smaller root, $u(c_1,0) = u_0(c_1)$, so $u_0(c_1) \ge 0$ iff $0$ lies between the roots, i.e.\ iff $c_{2,-}^{**} \le 0$. Sufficient conditions for the last claim to hold are $\theta > 8\kappa m/\gamma$ and $c_1 \in (0,c_{1,+}^{**}]$, as shown by the computation preceding the lemma.
\end{proof}
Next, we need to ensure that the two solution ranges for $c_1$ and $c_2$ that arise from $\mu_1$ and $\mu_2$ are not disjoint. 
\begin{itemize}
    \item  We recall that $\mu_1 > 0$ requires $0 < c_1 < c_1^*$, and that $\mu_2 > 0$ requires $\mathcal{D}_2(c_1) > 0$, which by Lemma \ref{lem:mu2_ranges} holds in particular for $0 < c_1 \leq c_{1,+}^{**}$ when $\theta > 8\kappa m/\gamma$. If we define $\overline{c}_1 := \min\{c_1^*, c_{1,+}^{**}\}$, then $c_1 \in (0,\overline{c}_1)$ together with $8\kappa m/\gamma < \theta < 2$ is sufficient for both.
    \item By Lemma \ref{lem:mu1_ranges}, $\mu_1 > 0$ requires the two-sided range $c_2 \in \mathcal{I}_1(c_1) := (c_{2,-}^*(c_1), c_{2,+}^*(c_1))$, and $c_{2,-}^*(c_1) > 0$ for every $c_1 \in (0,c_1^*)$ (it vanishes only at $c_1 = 0$ and is increasing in $c_1$). Since $\mu_2 > 0$ additionally requires $c_2 \in \mathcal{I}_2(c_1)$, the two ranges intersect precisely when $\max\{c_{2,-}^*(c_1), c_{2,-}^{**}(c_1)\} < \min\{c_{2,+}^*(c_1), c_{2,+}^{**}(c_1)\}$, and $\mathcal{I}(c_1) := \mathcal{I}_1(c_1)\cap\mathcal{I}_2(c_1)$ is then exactly the set of $c_2$ satisfying both (H1) and (H2). 
\end{itemize}
\subsection{Statement (H3) for \texorpdfstring{$\mu_3$}{mu-3}}\label{sec43}
This statement has two parts: one concerning the convergence rate of CBO with memory, and the other providing an upper bound on certain quantities to guarantee convergence of CBO with memory. Writing $\mu_3$ and the convergence rate $\chi$ in terms of the PSO parameters $c_1, c_2$, we have 
\begin{align*}
\chi &= \frac{2}{5} \cdot \frac{\min\left\{\frac{\gamma}{2m}, \frac{f(c_1,c_2)}{8\gamma m^2}, \frac{u(c_1,c_2)}{8\gamma c_1 m^2}\right\}}{\left(\left(\frac{\gamma}{2m}\right)^2 + 1 + \frac{3c_1}{2m} + 2 \cdot \left(\frac{\gamma}{m}\right)^2\right)}, \\
\mu_3 &:= \kappa \cdot \left(\left(\frac{2\alpha m}{c_1\chi} \cdot (C_{J} + 2\alpha^2) + \frac{24C_{J}^2}{\alpha \chi^3}\right) \cdot \frac{\mathbb{E}_{f_0}[\mathcal{H}(0)] \cdot K}{12} + \frac{6}{\alpha \chi} \cdot \frac{\mathbb{E}_{f_0}[|\nabla J(\overline{X}_0)|^2] \cdot K}{12}\right) < \frac{3}{32}.
\end{align*}
While these statements are very complicated in their current form, we will use asymptotic arguments to show that (H3) can be satisfied by taking appropriate limits. In a similar spirit to that of \cite{carrillo2018analytical}, we will need to take some ``well-preparedness'' conditions on the initial distribution. 

\subsection{Proof of Existence of Admissible Parameters} \label{sec:proof_existence}
In this section we prove that the set of parameters satisfying (H1)--(H3) from Theorem \ref{thm:cbo_mem_cv} is non-empty under the PSO coupling \eqref{eqn:coupling}. We do this in two stages. Theorem \ref{thm:existence_mu1_mu2} treats the initial law $f_0$ as fixed and gives (H1)--(H2) unconditionally, and in addition, guarantees (H3) conditionally on control of the initial energy and initial gradient of the objective function. Theorem \ref{thm:existence_family} then shows that all assumptions can be satisfied by exhibiting a one-parameter family of initial laws along which those bounds are met while the dependence on $K$ and $f_0$ is tracked. 
\begin{theorem}[Admissible Parameters for Fixed Initial Law]\label{thm:existence_mu1_mu2}
Let \(m\in(0,1)\) and \(\gamma:=1-m>0\). Fix \(\alpha>0\), \(\kappa>0\), and let \(K \ge 12\) be any constant; for a given initial law $f_0$ the relevant value is $K = 12/\mathbb{E}_{f_0}\!\left[e^{-\alpha J(\overline{Y}_0)}\right]$.
Assume $\kappa < \gamma/(4m)$ and that the memory-regularization parameter satisfies
\begin{equation}\label{eq:theta_cond_existence}
\frac{8\kappa m}{\gamma} < \theta < 2 .
\end{equation}
Then:
\begin{enumerate}
    \item[(a)] There exists \(\varepsilon = \varepsilon(m,\kappa,\theta,K)>0\) such that for every \(c_1\in(0,\varepsilon)\) the set
    \(\mathcal{I}(c_1) = \mathcal{I}_1(c_1)\cap\mathcal{I}_2(c_1) \subset \mathbb{R}_{>0}\) of \(c_2\)-values for which
    \(\mu_1>0\) and \(\mu_2>0\) hold simultaneously under the coupling \eqref{eqn:coupling} is non-empty and open.
    \item[(b)] Let $f_0$ be an initial law with the given value of $K$. For any \(c_1\in(0,\varepsilon)\) and \(c_2\in\mathcal{I}(c_1)\) as in (a), write \(\chi=\chi(c_1,c_2)>0\)
    for the resulting rate. If, in addition, \(f_0\) satisfies
    \begin{equation}\label{eq:mu3_wellprep}
    \mathbb{E}_{f_0}[\mathcal{H}(0)] < \eta_1(c_1,c_2) \qquad \text{and} \qquad \mathbb{E}_{f_0}[|\nabla J(\overline{X}_0)|^2] < \eta_2(c_1,c_2),
    \end{equation}
    where
    \[
    \eta_1(c_1,c_2) := \frac{9}{16\kappa K}\left(\frac{2\alpha m}{c_1\chi}(C_J+2\alpha^2) + \frac{24C_J^2}{\alpha\chi^3}\right)^{-1},
    \qquad
    \eta_2(c_1,c_2) := \frac{3\alpha\chi}{32\kappa K},
    \]
    then \(\mu_3 < 3/32\) also holds, so that (H1)-(H3) are simultaneously satisfied for this \(f_0\).
\end{enumerate}
\end{theorem}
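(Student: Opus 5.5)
The plan is to prove (a) by an asymptotic comparison of the interval endpoints as $c_1 \to 0^+$, using the explicit formulas of Lemma \ref{lem:mu1_ranges} and Lemma \ref{lem:mu2_ranges}. Part (b) is then a direct algebraic splitting of $\mu_3$.

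\emph{Step 1: left endpoint of $\mathcal{I}_2$.} We restrict to $c_1 < \min\{c_1^*, c_{1,+}^{**}\}$. Here $c_1^* > 0$ is explicit. For $c_{1,+}^{**}$, the quadratic $-c_1^2 + 2c_1(2\gamma + 6m\kappa\theta - 3\gamma K) + (8\gamma^2\kappa\theta - 64\kappa^2\gamma m)$ has a strictly positive constant term under \eqref{eq:theta_cond_existence}. Its roots therefore have product $<0$, so $c_{1,+}^{**} > 0$. By Lemma \ref{lem:mu2_ranges}, $u_0(c_1) \ge 0$ on this range, so $\mathcal{D}_2(c_1) > 0$ and $\mathcal{I}_2(c_1) = (0, c_{2,+}^{**}(c_1))$. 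Hence
\[
\mathcal{I}(c_1) = \big(c_{2,-}^*(c_1),\ \min\{c_{2,+}^*(c_1), c_{2,+}^{**}(c_1)\}\big),
\]
and it suffices to show that the left endpoint is smaller than both right endpoints for small $c_1$.

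\emph{Step 2: expansions as $c_1 \to 0^+$.} Set $A := 2K(8m+1)(3K-1)$.
\begin{itemize}
\item For the $\mu_1$ interval, $\sqrt{1-x} = 1 - x/2 + O(x^2)$ gives
\[
c_{2,-}^*(c_1) = \frac{(3K-1)c_1}{2} + O(c_1^2), \qquad c_{2,+}^*(c_1) \to \frac{(1-m)^2}{K(8m+1)} > 0 .
\]
\item For the $\mu_2$ interval, write $B := 8\kappa\theta\gamma^2 - 64\kappa^2\gamma m > 0$. From \eqref{eqn:u_coeffs},
\[
u_2(c_1) \to 2\gamma^2, \qquad u_1(c_1) = 4\gamma^2 c_1, \qquad u_0(c_1) = \gamma B c_1 + O(c_1^2).
\]
Hence $\mathcal{D}_2(c_1) = 8\gamma^3 B c_1 + O(c_1^2)$, and
\[
c_{2,+}^{**}(c_1) = \sqrt{\frac{B}{2\gamma}}\,\sqrt{c_1} + O(c_1).
\]
\end{itemize}
The left endpoint is $O(c_1)$, while the right endpoints are of order $\sqrt{c_1}$ and order $1$. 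There is therefore $\varepsilon \le \min\{c_1^*, c_{1,+}^{**}\}$, depending only on $(m,\kappa,\theta,K)$, such that $c_{2,-}^*(c_1) < \min\{c_{2,+}^*(c_1), c_{2,+}^{**}(c_1)\}$ for all $c_1 \in (0,\varepsilon)$.

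\emph{Step 3: conclusion of (a).} For such $c_1$, $\mathcal{I}(c_1)$ is a non-empty intersection of two open intervals, hence open. It lies in $\mathbb{R}_{>0}$ because $c_{2,-}^*(c_1) > 0$. By Lemmas \ref{lem:mu1_ranges} and \ref{lem:mu2_ranges}, it is exactly the set of $c_2$ for which (H1) and (H2) both hold.

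The step I expect to be the main obstacle is making this $\varepsilon$ rigorous rather than merely asymptotic. To handle it, I would replace the expansions by explicit inequalities:
\begin{itemize}
\item $1 - \sqrt{1-x} \le x$ for $x \in [0,1]$, which gives the upper bound $c_{2,-}^* \le (3K-1)c_1$;
\item $\sqrt{\mathcal{D}_2} \ge 2\sqrt{u_0u_2}$, together with the lower bound $u_0 \ge \gamma c_1 B/2$ valid for $c_1$ small, which gives a lower bound $c_{2,+}^{**} \gtrsim \sqrt{c_1}$;
\item $c_{2,+}^*(c_1) \ge (1-m)^2/(2K(8m+1))$.
\end{itemize}
Comparing these explicit bounds yields a concrete $\varepsilon$.

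\emph{Part (b).} Since $\mu_1, \mu_2 > 0$ by (a) and $\gamma/(2m) > 0$, the rate satisfies $\chi > 0$, so $\eta_1$ and $\eta_2$ are well-defined and positive. Write
\[
\mu_3 = \frac{\kappa K}{12}\,P\,\mathbb{E}_{f_0}[\mathcal{H}(0)] + \frac{\kappa K}{12}\cdot\frac{6}{\alpha\chi}\,\mathbb{E}_{f_0}[|\nabla J(\overline{X}_0)|^2],
\]
where $P$ is the bracket $\frac{2\alpha m}{c_1\chi}(C_J+2\alpha^2) + \frac{24C_J^2}{\alpha\chi^3}$. We make each term strictly smaller than $3/64$:
\begin{itemize}
\item the first term is $< 3/64$ if and only if $\mathbb{E}_{f_0}[\mathcal{H}(0)] < \frac{9}{16\kappa K P} = \eta_1$;
\item the second term is $< 3/64$ if and only if $\mathbb{E}_{f_0}[|\nabla J(\overline{X}_0)|^2] < \frac{3\alpha\chi}{32\kappa K} = \eta_2$.
\end{itemize}
Summing the two bounds gives $\mu_3 < 3/32$. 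Hence (H1)--(H3) hold simultaneously for this $f_0$.
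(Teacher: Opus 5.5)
Your proposal is correct and follows essentially the same route as the paper. You restrict $c_1$ so that $u_0(c_1)\ge 0$ and $\mathcal{I}_2(c_1)=(0,c_{2,+}^{**}(c_1))$, compare $c_{2,-}^*(c_1)=\mathcal{O}(c_1)$ with $c_{2,+}^{**}(c_1)=\Theta(\sqrt{c_1})$, and split $\mu_3$ into two terms each kept below $3/64$. The only differences are minor: your explicit inequalities ($1-\sqrt{1-x}\le x$ and $\sqrt{\mathcal{D}_2}\ge 2\sqrt{u_0u_2}$) make the paper's constants $C_1,C_2$ concrete, and you check $c_{2,-}^*<c_{2,+}^*$ asymptotically, whereas the paper reads it off Lemma~\ref{lem:mu1_ranges}.
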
 
\begin{proof}
By Lemma \ref{lem:mu1_ranges}, $\mu_1>0$ holds if and only if $c_1\in(0,c_1^*)$ and $c_2\in \mathcal{I}_1(c_1) =(c_{2,-}^*(c_1),c_{2,+}^*(c_1))\subset(0,\infty)$, and one checks from the closed form in \eqref{mu1_ranges} that
\begin{equation}\label{eq:mu1_lower_scaling_fixed}
c_{2,-}^*(c_1) = \frac{3K-1}{2}\,c_1 + \mathcal{O}(c_1^2) \qquad \text{as } c_1\to 0^+,
\end{equation}
since we have the following by a first-order Taylor expansion.
$$\sqrt{\gamma^2 - 2K(8m+1)(3K-1)c_1} = \gamma - K(8m+1)(3K-1)c_1/\gamma + \mathcal{O}(c_1^2)$$ 
By Lemma \ref{lem:mu2_ranges}, under \eqref{eq:theta_cond_existence} the quantity $8\gamma^2\kappa\theta - 64\kappa^2\gamma m = 8\kappa\gamma(\theta\gamma - 8\kappa m)$ is strictly positive, so $u_0(c_1)>0$ for all sufficiently small $c_1>0$; hence $\mathcal{D}_2(c_1)>0$, $c_{2,-}^{**}(c_1)<0$, and $\mathcal{I}_2(c_1)=(0,c_{2,+}^{**}(c_1))$ for such $c_1$. As $c_1\to0^+$ we have $u_1(c_1)=\mathcal{O}(c_1)$, $u_2(c_1)\to 2\gamma^2$, and $u_0(c_1) = 8\kappa\gamma^2(\theta\gamma-8\kappa m)c_1+\mathcal{O}(c_1^2)$, so $\mathcal{D}_2(c_1) = u_1^2(c_1)+4u_0(c_1)u_2(c_1) = \Theta(c_1)$, and therefore
\begin{equation}\label{eq:mu2_upper_scaling_fixed}
c_{2,+}^{**}(c_1)=\Theta(\sqrt{c_1})\qquad \text{as } c_1\to 0^+ .
\end{equation}
We then have $\mathcal{I}(c_1)=\mathcal{I}_1(c_1)\cap\mathcal{I}_2(c_1) = \bigl(c_{2,-}^*(c_1),\,\min\{c_{2,+}^*(c_1),c_{2,+}^{**}(c_1)\}\bigr)$ for all sufficiently small $c_1$ (with $c_{2+}(c_1)$ being the smaller quantity in the limit).
By \eqref{eq:mu1_lower_scaling_fixed} and \eqref{eq:mu2_upper_scaling_fixed}, there exist constants \(C_1,C_2>0\) and \(\varepsilon\in(0,c_1^*)\) such that for all \(0<c_1<\varepsilon\),
\[
c_{2,-}^*(c_1)\le C_1 c_1,
\qquad
c_{2,+}^{**}(c_1)\ge C_2\sqrt{c_1}.
\]
Since \(c_1<C_2^2/C_1^2\) eventually forces \(C_1c_1<C_2\sqrt{c_1}\), shrinking \(\varepsilon\) further if necessary gives \(c_{2,-}^*(c_1) < c_{2,+}^{**}(c_1)\) for all \(0<c_1<\varepsilon\). Combined with $c_{2,-}^*(c_1)<c_{2,+}^*(c_1)$ (Lemma \ref{lem:mu1_ranges}), this shows \(\mathcal{I}(c_1)\) is non-empty and open for all \(c_1\in(0,\varepsilon)\), establishing part (a).
\newline
\newline
For part (b), fix \(c_1\in(0,\varepsilon)\) and \(c_2\in\mathcal{I}(c_1)\) as furnished by part (a), and let \(\chi=\chi(c_1,c_2)>0\) denote the corresponding rate. With \(c_1,c_2,\chi,\kappa,\alpha,m,K,C_J\) all fixed, $\mu_3$ is an explicit non-negative linear combination
\begin{equation}\label{eqn:mu3-compact}
    \mu_3 = \Xi_1(c_1,c_2)\cdot\mathbb{E}_{f_0}[\mathcal{H}(0)] + \Xi_2(c_1,c_2)\cdot\mathbb{E}_{f_0}[|\nabla J(\overline{X}_0)|^2],
\end{equation}
where
\begin{equation}\label{eqn:Xi}
\Xi_1(c_1,c_2) := \frac{\kappa K}{12}\left(\frac{2\alpha m}{c_1\chi}(C_J+2\alpha^2) + \frac{24C_J^2}{\alpha\chi^3}\right) > 0,
\qquad
\Xi_2(c_1,c_2) := \frac{\kappa K}{2\alpha\chi} > 0\,.
\end{equation}
Requiring each of the two terms to be below \(3/64\) is sufficient for \(\mu_3<3/32\), which yields exactly the thresholds \(\eta_1(c_1,c_2) = \tfrac{3}{64\Xi_1(c_1,c_2)}\) and \(\eta_2(c_1,c_2)=\tfrac{3}{64\Xi_2(c_1,c_2)}\) stated in \eqref{eq:mu3_wellprep}. Since \(\Xi_1,\Xi_2>0\) are finite for the fixed \((c_1,c_2)\), the thresholds \(\eta_1,\eta_2\) are strictly positive, and \eqref{eq:mu3_wellprep} is exactly the assertion that \(f_0\) meets them. This establishes part (b).
\end{proof}
Next, we show that there exist choices of parameters $(c_1, c_2)$ and initial laws $f_0$ for which the conditions in $(\textbf{a})$ and $(\textbf{b})$ of Theorem \ref{thm:existence_mu1_mu2} hold. In other words, the set of parameters and initial data admissible for Theorem \ref{thm:cbo_mem_cv} under the coupling \eqref{eqn:coupling} is non-empty.

\begin{theorem}[Existence of Admissible Parameters and Initial Law]\label{thm:existence_family}
Let $J$ satisfy (B1)--(B4), normalized so that $\underline{J}=0$, and let $x^*$ be the global minimizer. Let $m\in(0,1)$, $\gamma := 1-m$, and fix $\alpha>0$, $\kappa \in (0, \gamma/(4m))$ and $\theta \in \big(8\kappa m/\gamma,\, 2\big)$. Let $Z$ be a centered, compactly supported, non-degenerate random vector on $\mathbb{R}^d$, with $\mathrm{supp}(Z)\subset \overline{B_R(0)}$ and $\mathbb{E}|Z|^2 \in (0,\infty)$, and for $\delta>0$ define the initial law $f_0^\delta := \mathrm{Law}(\overline{X}_0^\delta, \overline{Y}_0^\delta, \overline{V}_0^\delta)$ by
\begin{equation}\label{eqn:delta_family}
\overline{X}_0^\delta = \overline{Y}_0^\delta = x^* + \delta Z, \qquad \overline{V}_0^\delta = 0,
\end{equation}
and write $K_\delta := 12\big/\mathbb{E}_{f_0^\delta}\![e^{-\alpha J(\overline{Y}_0^\delta)}]$. Then there exist $c_1, c_2 > 0$ and $\delta_0 > 0$ such that for every $\delta \in (0,\delta_0)$ the parameters $(c_1,c_2)$ and the initial law $f_0^\delta$ satisfy (H1)-(H3) simultaneously. 
\end{theorem}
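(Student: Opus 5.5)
The plan is to reduce everything to Theorem \ref{thm:existence_mu1_mu2}, applied with a single value of $K$ that works uniformly along the family $f_0^\delta$. The first step is to control $K_\delta$ as $\delta\to0$. Since $J$ is continuous, $J(x^*)=\underline J=0$, and $|\delta Z|\le \delta R$ almost surely, we have
\[
\mathbb{E}\big[e^{-\alpha J(x^*+\delta Z)}\big]\ \ge\ e^{-\alpha \sup_{|h|\le \delta R}J(x^*+h)}\ \longrightarrow\ 1 .
\]
Hence $K_\delta\downarrow 12$, and for, say, $\delta<\delta_1$ we get $12\le K_\delta\le 13$.

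The thresholds in Lemma \ref{lem:mu1_ranges} and Lemma \ref{lem:mu2_ranges} depend on $K$ monotonically: increasing $K$ only makes $\mu_1,\mu_2$ smaller, because $K$ multiplies negative terms. So I will fix $\bar K:=13$. Theorem \ref{thm:existence_mu1_mu2}(a) with $\bar K$ then provides $c_1\in(0,\varepsilon)$ and $c_2\in\mathcal I(c_1)$ with $\mu_1,\mu_2>0$ at $K=\bar K$. Monotonicity in $K$ gives $\mu_1,\mu_2>0$ for every $K\in[12,13]$, and therefore for all $f_0^\delta$ with $\delta<\delta_1$. These $(c_1,c_2)$ are now fixed independently of $\delta$.

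The rate must also be uniform in $\delta$. Since $\mu_1$ and $\mu_2$ are affine and decreasing in $K$, their minima over $K\in[12,13]$ are attained at $K=13$ and are positive. This gives a uniform lower bound $\chi_\delta\ge\underline\chi>0$. Consequently $\Xi_1$ and $\Xi_2$ in \eqref{eqn:Xi} are bounded above uniformly in $\delta$ (by their values at $K=13$, $\chi=\underline\chi$), and the thresholds $\eta_1,\eta_2$ are bounded below by positive constants $\underline\eta_1,\underline\eta_2$.

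The remaining step is to show that the two initial quantities in \eqref{eq:mu3_wellprep} vanish as $\delta\to0$; I expect this to be the main (if mild) obstacle, since it needs the structure of $\mathcal H$ and of $J$ near $x^*$.
\begin{itemize}
\item[(1)] Energy: since $\overline V_0=0$ and $\overline X_0=\overline Y_0$, only the first term of \eqref{eqn:energy} survives, and $\mathbb E[\mathcal H(0)]=(\gamma/2m)^2\delta^2\,\mathbb E|Z-\mathbb EZ|^2=O(\delta^2)$.
\item[(2)] Gradient: $\nabla J(x^*)=0$ at the interior global minimizer and $\|\nabla^2J\|_\infty\le c_J$ by (B2), so $|\nabla J(x^*+\delta Z)|\le c_J\delta|Z|$. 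Hence $\mathbb E|\nabla J(\overline X_0^\delta)|^2\le c_J^2\delta^2\,\mathbb E|Z|^2=O(\delta^2)$.
\end{itemize}
Choosing $\delta_0\le\delta_1$ so small that both bounds fall below $\underline\eta_1$ and $\underline\eta_2$ respectively, Theorem \ref{thm:existence_mu1_mu2}(b) yields $\mu_3<3/32$ for every $\delta\in(0,\delta_0)$, and (H1)--(H3) hold simultaneously.

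The delicate point is making sure the constants $C_J$ from (B4) and $\alpha$ enter only through $\Xi_1,\Xi_2$, which are fixed once $(c_1,c_2,\underline\chi,\bar K)$ are. One must also check that the normalization $\underline J=0$ is used consistently in $K_\delta$.
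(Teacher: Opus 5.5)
Your proposal is correct and follows the paper's proof essentially step for step: you control $K_\delta$, make a $\delta$-independent choice of $(c_1,c_2)$ from Theorem~\ref{thm:existence_mu1_mu2}(a), get uniformity of $\mu_1,\mu_2,\chi$ in $K$ from the fact that they are affine and decreasing in $K$, and bound $\mathbb{E}[\mathcal{H}(0)]$ and $\mathbb{E}|\nabla J(\overline{X}_0^\delta)|^2$ by $\mathcal{O}(\delta^2)$. The differences are cosmetic: you apply part (a) at the upper value $\bar K=13$, so monotonicity alone covers $[12,13]$, whereas the paper applies it at $K=12$ and extends to some $[12,\overline{K}]$ by continuity; and you obtain $K_\delta\to 12$ from continuity of $J$ rather than the Taylor bound, which suffices because no rate is needed (though $K_\delta$ need not converge monotonically, so your $\downarrow$ is an overstatement you never actually use).
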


\begin{proof}
To investigate Condition (H3) for the initial condition $f_0^{\delta}$ for small enough $\delta$, we split our argument into five steps. 
\newline 
\newline
\emph{Step 1: $K_\delta \to 12$ at rate $\mathcal{O}(\delta^2)$ as $\delta \rightarrow 0^+$.} By (B1) and (B2) we have $J(x^*)=0$, $\nabla J(x^*)=0$ and $\lVert\nabla^2 J\rVert_\infty \le c_J$, so Taylor's theorem gives $0 \le J(x^*+\delta z) \le \tfrac12 c_J \delta^2 |z|^2$ for all $z$. Since $|Z| \le R$ a.s, the following bound implies $12 \le K_\delta \le 12\,e^{\alpha c_J R^2 \delta^2/2} = 12 + \mathcal{O}(\delta^2)$ as $\delta \to 0^+$.
$$1 \ \ge\ \mathbb{E}_{f_0^\delta}\!\left[e^{-\alpha J(\overline{Y}^\delta_0)}\right] \ \ge\ e^{-\alpha c_J R^2\delta^2/2},$$
\emph{Step 2: The initial-data quantities are $\mathcal{O}(\delta^2)$.} Since $\overline{V}_0^\delta = 0$ and $\overline{X}_0^\delta = \overline{Y}_0^\delta$, every term of $\mathcal{H}(0)$ in \eqref{eqn:energy} vanishes except the first, and since $Z$ is centered, $\mathbb{E}[\overline{X}_0^\delta] = x^*$. Hence
$$\mathbb{E}_{f_0^\delta}[\mathcal{H}(0)] = \left(\frac{\gamma}{2m}\right)^2 \mathbb{E}\big|\overline{X}_0^\delta - \mathbb{E}[\overline{X}_0^\delta]\big|^2 = \left(\frac{\gamma}{2m}\right)^2 \mathbb{E}|Z|^2\,\delta^2 .$$
Likewise $|\nabla J(x^*+\delta z)| = |\nabla J(x^*+\delta z) - \nabla J(x^*)| \le c_J \delta|z|$, so
$$\mathbb{E}_{f_0^\delta}\big[|\nabla J(\overline{X}_0^\delta)|^2\big] \le c_J^2\,\mathbb{E}|Z|^2\,\delta^2 .$$
\emph{Step 3: Choice of $(c_1,c_2)$ at $K=12$.} Apply Theorem \ref{thm:existence_mu1_mu2}(a) with the constant $K=12$ (since (a) is an algebraic statement about the parameter $K \ge 12$ and does not refer to any initial law). This produces $c_1 \in (0,\varepsilon)$ and a non-empty open set $\mathcal{I}(c_1) \ni c_2$ on which $\mu_1|_{K=12}>0$ and $\mu_2|_{K=12}>0$, and hence $\chi|_{K=12}>0$. Fix such a pair $(c_1,c_2)$ for the remainder of the proof.
\newline
\newline
\emph{Step 4: Stability in $K$.} For fixed $(c_1,c_2,m,\kappa,\theta)$, both $\mu_1$ and $\mu_2$ are affine functions of $K$, and $\chi$ is a continuous function of $(\mu_1,\mu_2)$. Since $\mu_1,\mu_2 > 0$ at $K=12$, and $\mu_1, \mu_2, \chi$ are decreasing functions of $K$, there exists $\overline{K}>12$ such that $\mu_1 > 0$, $\mu_2 > 0$ for all $K \in [12,\overline{K}]$; on this compact interval $\chi$ is continuous and strictly positive, so
$$\chi_* := \min_{K \in [12,\overline{K}]} \chi(K) > 0 .$$
\emph{Step 5: (H3) for small $\delta$.} For any $K \in [12,\overline{K}]$ the coefficients of \eqref{eqn:Xi} obey the $\delta$-independent bounds
$$\Xi_1 \le \overline{\Xi}_1 := \frac{\kappa \overline{K}}{12}\left(\frac{2\alpha m}{c_1\chi_*}(C_J+2\alpha^2) + \frac{24C_J^2}{\alpha\chi_*^3}\right), \qquad \Xi_2 \le \overline{\Xi}_2 := \frac{\kappa \overline{K}}{2\alpha\chi_*},$$
both finite. By Step 1 there is $\delta_1>0$ with $K_\delta \in [12,\overline{K}]$ for all $\delta \in (0,\delta_1)$, so that (H1) and (H2) hold along the family for such $\delta$. By Step 2 and \eqref{eqn:mu3-compact},
$$\mu_3 \ \le\ \overline{\Xi}_1 \left(\frac{\gamma}{2m}\right)^2\mathbb{E}|Z|^2\,\delta^2 + \overline{\Xi}_2\, c_J^2\,\mathbb{E}|Z|^2\,\delta^2 \ =\ \mathcal{O}(\delta^2).$$
Choosing $\delta_0 \in (0,\delta_1)$ small enough that this bound is below $3/32$ gives (H3) for all $\delta \in (0,\delta_0)$.
\end{proof}

\begin{remark}\label{rem:delta_local}
The family \eqref{eqn:delta_family} is precisely a swarm initialized at rest in a $\delta$-neighborhood of the global minimizer, with every particle already at its own personal best. Theorem \ref{thm:existence_family} is therefore an existence statement of a local character: it shows the admissible set in Theorem~\ref{thm:existence_mu1_mu2} is non-empty, but the conditions on the initial law mean that $f_0$ needs to be already very concentrated around a point. This is the same tension quantified numerically in Section~\ref{sec6}, and is not an artifact of the construction: the smallness of $\chi$ forced by the $(\mu_1,\mu_2)$ wedge is what makes the admissible $\delta$ in Theorem~\ref{thm:existence_family} so small. The same phenomenon is already visible for the simpler first-order model \eqref{eqn:cbo_iso}: Theorem \ref{thm:pso_in_cbo}(i) constrains the spread of $\rho_0$, through $V_0=V(\rho_0)$, also see Remark~\ref{rem:alpha_large}.
\end{remark}

\section{Tradeoff and Gap Analysis} \label{sec5}
We have explicitly derived the admissible $(c_1,c_2)$ ranges under the PSO coupling, which are coupled non-linearly through $m,\alpha,\theta,\kappa$. In the standard first-order CBO model, the drift $\lambda$ and diffusion $\sigma$ can be tuned independently. The PSO reduction instead fixes $\lambda_i,\sigma_i\propto c_i$, so a single parameter $c_i$ simultaneously controls both the pull toward the current best point and the noise around it; increasing $c_i$ to strengthen the drift necessarily strengthens the diffusion by the same factor, in contrast to CBO where the two can be balanced independently. We examine the resulting tradeoffs for all model parameters. 
\newline
\newline 
Notice that $\mu_1$ can be written as
    \begin{equation*}
    \mu_1 = \frac{-c_1\gamma(3K-1)}{8m^2} + \frac{1}{4m}\left[\frac{\gamma c_2}{m} - \frac{9Kc_2^2}{\gamma} - \mathbf{\frac{Kc_2^2}{m}}\right]
    \end{equation*}
with $K$ as in (\ref{eqn:mu1}). The last term in bold comes from $\sigma_2$ (via $3\sigma_2^2/m^2$ in the original statement of (H1)). Because the coupling forces $\sigma_2\propto c_2$, this is a term that is \emph{quadratic and negative} in $c_2$, competing against the single term linear and positive in $c_2$ ($\gamma c_2/m$, from the drift $\lambda_2$). Increasing $c_2$ without bound eventually makes the negative quadratic terms dominate, forcing $\mu_1$ below zero. This is exactly why Lemma \ref{lem:mu1_ranges} gives a bounded interval $(c_{2,-}^*(c_1), c_{2,+}^*(c_1))$ for $c_2$, rather than unbounded. 
\newline 
\newline 
The main effective constraint on $\mu_2$ is instead the memory-regularization parameter $\theta$: by Proposition \ref{prop:theta_zero}, $\theta=0$ admits no parameters at all, and \eqref{mu2_pos} is the sufficient condition under which we can exhibit conditions (H1)-(H2). The parameter $\kappa$ enters through the same expression, and we discuss it below.
\newline
\newline
The limits required to recover PSO from CBO with memory ($\alpha\to\infty$, $m\to1$; see Figure \ref{fig:model_chart}) are precisely those that shrink the admissible ranges derived in Section~\ref{sec4} to nothing. We make this precise for $c_1^*(m,K)$ below, also see overview in Table \ref{table:limits}.
$$c_1^*(m,K) := \frac{(1-m)^2}{2K \cdot (8m + 1) \cdot (3K - 1)}$$
\textbf{Inertia limit ($m$).} Differentiating the closed form for $c_1^*$ from Lemma \ref{lem:mu1_ranges} with $K$ held fixed,
\begin{equation*}
\frac{\partial c_1^{*}(m,K)}{\partial m} = \frac{(m-1)(4m+5)}{K(3K-1)(8m+1)^2} \leq 0 \qquad \text{for } m\in(0,1),
\end{equation*}
so $c_1^*(\cdot,K)$ is non-increasing in $m$, with $$c_1^*(0,K) = \dfrac{1}{2K(3K-1)}>0$$ 
and $c_1^*(m,K)\to0$ as $m\to1^-$.
\newline
\newline
\textbf{Annealing limit ($\alpha$).} As $\alpha\to0^+$, $K\to12^+$ exactly (since $\mathbb{E}_{f_0}[e^{-\alpha J(\overline{Y}_0)}]\to1^-$), giving the finite positive limit 
$$c_1^*(m,12^+) = \dfrac{(1-m)^2}{840(8m+1)}.$$
As $\alpha\to\infty$, provided the initial law $f_0$ places no mass exactly at a global minimizer of $J$ (a generic, non-atomic condition), so $K\to\infty$ by dominated convergence; since $c_1^*\propto 1/(K(3K-1))=O(1/K^2)$, the range collapses to $0$.

\begin{table}[h!]
\centering
\begin{tabular}{|c|c|c|c|c|}
 \hline
 Limit & $m \to 0$ & $m \to 1$ & $\alpha \to 0^+$ & $\alpha \to \infty$ \\
 \hline
 $c_1^*(m,\alpha)$ & $\dfrac{1}{2K(3K-1)}$ & $0$ & $\dfrac{(1-m)^2}{840(8m+1)}$ & $0$ \\
 \hline
\end{tabular}
\caption{Both limits needed to recover PSO ($m\to1$ and $\alpha\to\infty$) drive $c_1^*$ to $0$, i.e.\ the admissible set for $(c_1, c_2)$ vanishes.}
\label{table:limits}
\end{table}
\begin{table}[!ht]
\centering
\begin{tabular}{|c|c|c|c|c|}
 \hline
 Parameter & Description & $\mu_1$ & $\mu_2$ & $\mu_3$ \\
 \hline
 $\alpha$ & Annealing term & $\downarrow$ & $\downarrow$ & $\downarrow$ (via $K$) \\
 $\theta$ & Memory buffer & $-$ & $\uparrow$ & $\uparrow$ \\
 $\kappa$ & Memory effect & $-$ & n.m.\ (see below) & n.m.\ (see below) \\
 $m$ & Inertia term & $\downarrow$ & $\downarrow$ (via $c_{1,+}^{**}$) & $\downarrow$ \\
 \hline
\end{tabular}
\caption{
Sensitivity of the admissible $(c_1,c_2)$ range to each parameter, within the admissible window \eqref{eq:theta_cond_existence} for $(\kappa,\theta)$. For each parameter and each condition $\mu_1, \mu_2, \mu_3$, the arrow shows the direction in which that parameter should be moved to enlarge the admissible range for $c_1, c_2$: $\downarrow$ marks a parameter whose increase shrinks the range (so it should be decreased to widen it); $\uparrow$ marks a parameter whose increase enlarges the range.
A dash ``$-$'' indicates the parameter does not enter that condition; ``n.m.'' marks relationships shown or suspected to be non-monotonic (see discussion below).}
\label{table:sensitivity}
\end{table}
The $\alpha$ and $m$ rows for $\mu_1$ follow directly from Lemma \ref{lem:mu1_ranges} and the computation above. The $\theta$ column deserves one caveat: $\theta$ does not enter $\mu_1$ or $\mu_3$ explicitly, and helps $\mu_2$ and (through $\chi$) $\mu_3$ only within the window \eqref{eq:theta_cond_existence}. Two further entries are more subtle:
\begin{itemize}
    \item \emph{$\kappa$'s effect on $\mu_2$ is non-monotonic.} Differentiating $c_{1,+}^{**}$ (Lemma \ref{lem:mu2_ranges}) with respect to $\kappa$ and evaluating numerically over the valid range $0<\kappa<\gamma\theta/(8m)$ enforced by \eqref{mu2_pos} shows $c_{1,+}^{**}$ first \emph{increases}, reaches an interior maximum, and then \emph{decreases} back toward $0$ as $\kappa$ approaches the boundary $\gamma\theta/(8m)$. This is consistent with the structure of $u_0(c_1)$: the term $8\kappa\theta\gamma^2$ is linear and helps, while $-64\kappa^2\gamma m$ is quadratic and eventually dominates. A small increase in $\kappa$ (relative to $\theta$) is therefore beneficial for $\mu_2$, but pushing $\kappa$ too close to the bound in \eqref{mu2_pos} is counterproductive.
\item \emph{$m$'s effect on $\mu_2$ is monotone through the sufficient bound $c_{1,+}^{**}$.} The sufficient
condition~\eqref{mu2_pos} that certifies $u_0(c_1) \ge 0$ reads $8\kappa m < \theta(1-m)$, so it is
available only for $m \in (0, m^\dagger(\kappa,\theta))$, where $m^\dagger(\kappa,\theta) := \frac{\theta}{\theta+8\kappa}$
is the value at which~\eqref{mu2_pos} holds with equality;
once $m \ge m^\dagger$, this route no longer certifies $u_0(c_1) \ge 0$ for any $c_1 > 0$.
On the remaining range the certified interval itself shrinks as the inertia grows: by
Lemma~\ref{lem:c1plus_monotone} below, $c_{1,+}^{**}$ is strictly decreasing in $m$ on $(0,m^\dagger)$,
the same direction as the effect of $m$ on $c_1^*$ recorded in Lemma~\ref{lem:mu1_ranges}. We stress
that this concerns the sufficient upper bound $c_{1,+}^{**}$ only; the monotonicity in $m$ of the full
region $\{\mu_2 > 0\}$ would require a separate argument.
\end{itemize}
\begin{lemma}\label{lem:c1plus_monotone}
Fix $K \ge 12$, $\kappa > 0$ and $0 < \theta < 2$, and let $c_{1,+}^{**}$ be as in
Lemma~\ref{lem:mu2_ranges}. Then $m \mapsto c_{1,+}^{**}$ is strictly decreasing on
$(0, m^\dagger(\kappa,\theta))$, where $m^\dagger(\kappa,\theta) = \theta/(\theta+8\kappa)$.
\end{lemma}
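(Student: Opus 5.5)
The plan is to treat $c_{1,+}^{**}$ as the positive root of a quadratic and differentiate implicitly in $m$. Write $\gamma=1-m$ and
$$A(m):=\gamma(2-3K)+6m\kappa\theta,\qquad B(m):=8\kappa\gamma(\theta\gamma-8\kappa m),$$
so that $c:=c_{1,+}^{**}=A+\sqrt{A^2+B}$. On $(0,m^\dagger)$ we have $\theta\gamma>8\kappa m$, hence $B>0$ and $c>0$. Moreover $c$ is the positive root of $g(c,m):=c^2-2Ac-B=0$, and $\partial_c g=2(c-A)=2\sqrt{A^2+B}>0$. The implicit function theorem therefore gives
$$\frac{dc}{dm}=\frac{2A'(m)\,c+B'(m)}{2\sqrt{A^2+B}},$$
and it suffices to show $2A'c+B'<0$ on $(0,m^\dagger)$.

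Next I compute the ingredients. First, $A'=3K-2+6\kappa\theta>0$. Second, using $\frac{d}{dm}(m\gamma)=1-2m$,
$$B'=-16\kappa\bigl(\theta\gamma+4\kappa(1-2m)\bigr).$$
The first factor of $B'$ is negative, so $B'$ is the helpful term. The dangerous term is $2A'c$, which is positive, and I will control it by showing that $c$ is small because $A$ is very negative. Indeed, on $(0,m^\dagger)$ we have $6m\kappa\theta<\tfrac34\theta^2\gamma<3\gamma$ (using $8\kappa m<\theta\gamma$ and $\theta<2$). Hence $A<\gamma(5-3K)<0$, since $K\ge12$. Rationalizing then gives
$$c=\frac{B}{\sqrt{A^2+B}+|A|}\le\frac{B}{2|A|}.$$
The target inequality $2A'c+B'<0$ therefore reduces to
$$\frac{A'}{|A|}\,B<-B',\qquad\text{i.e.}\qquad (\theta\gamma-8\kappa m)\,\frac{3K-2+6\kappa\theta}{\gamma(3K-2)-6m\kappa\theta}\cdot\gamma<2\theta\gamma+8\kappa(1-2m).$$

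To finish, I bound the ratio $r:=\gamma A'/|A|$. Using $|A|\ge\gamma(3K-5)$, we get $r\le(3K-2+6\kappa\theta)/(3K-5)$. Whenever $r\le2$, the left side is at most $2\theta\gamma-16\kappa m$, which is strictly less than $2\theta\gamma+8\kappa-16\kappa m$, and we are done. The main obstacle is the regime where $\kappa\theta$ is large compared with $K$, so that $r>2$. This can only happen for small $m$, because $m<m^\dagger$ forces $\kappa<\theta\gamma/(8m)$.

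In that regime I would not use the crude bound $|A|\ge\gamma(3K-5)$. Instead I would keep the exact denominator $\gamma(3K-2)-6m\kappa\theta$ and write $\theta\gamma-8\kappa m=\theta\gamma(1-m/m^\dagger)\cdot(\text{positive factor})$. The aim is to show that the large factor $6\kappa\theta$ in $A'$ is compensated by the factor $\theta\gamma-8\kappa m$ in $B$, or is absorbed by the $8\kappa$ term on the right-hand side. Concretely, I would compare the $\kappa$-linear and $\kappa^2$-terms on both sides after clearing denominators. That leaves a polynomial inequality in $(m,\kappa\theta,\kappa)$, which I would check term by term using $K\ge12$ and $\theta<2$. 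If this direct comparison fails, the fallback is to use the exact expression for $c$ rather than the bound $B/(2|A|)$ in this regime, and to verify the sign of $2A'c+B'$ directly.
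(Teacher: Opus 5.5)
Your reduction is correct as far as it goes. The formula $\frac{dc}{dm}=\frac{2A'c+B'}{2\sqrt{A^2+B}}$, the bound $A<\gamma(5-3K)<0$, the estimate $c<B/(2|A|)$ and the resulting target inequality are all right.

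The gap is that you only close the argument when $r\le 2$. Your crude bound guarantees this only when $6\kappa\theta\le 3K-8$. The lemma allows every $\kappa>0$, and the complementary regime is not empty. For $K=12$, $\theta=1.9$, $\kappa=5$ one has $6\kappa\theta=57>28$ while $m^\dagger=1.9/41.9>0$, and in fact $r\to 1+57/34>2$ as $m\to0$. For that regime you give only a plan: clear denominators, compare terms, and fall back to the exact $c$ if that fails. As written, the lemma is therefore not established.

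The missing step is short and follows your own remark about the $8\kappa$ term. Put $D:=\theta\gamma-8\kappa m$. On $(0,m^\dagger)$ this satisfies $0<D<\theta$. Then $-B'=16\kappa D+64\kappa^2$ and $A'B/|A|=8\kappa rD$, so what you need is $rD\le 2D+8\kappa$. From $|A|>\gamma(3K-5)$ and $K\ge12$ you get
\[
r<\frac{3K-2}{3K-5}+\frac{6\kappa\theta}{3K-5}\le\frac{34}{31}+\frac{6\kappa\theta}{31}.
\]
Using $D<\theta<2$, this gives
\[
rD<\frac{34}{31}D+\frac{6\kappa\theta^2}{31}<\frac{34}{31}D+\frac{24}{31}\kappa<2D+8\kappa ,
\]
with no case split needed.

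With this line added, your proof is complete and takes a different route from the paper's. The paper substitutes $s=m/\gamma$ and normalizes $h=c_{1,+}^{**}/\gamma$. It then shows $h(s)<16\kappa/(3\theta)$ by evaluating the quadratic at that point, where the $s$-dependent terms cancel exactly. From this it deduces $h'(s)<0$ and concludes via $c_{1,+}^{**}=h(s)/(1+s)$. Your route works directly in $m$. It replaces that exact cancellation with the cruder but more transparent bound $c<B/(2|A|)$. That bound exploits $A$ being very negative, which uses $K\ge 12$ and $\theta<2$ just as the paper's evaluation at $h_c$ does.
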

\begin{proof}
By the computation preceding Lemma~\ref{lem:mu2_ranges}, $c_{1,+}^{**}$ is the larger root of
$c_1^2 - 2Ac_1 - B = 0$ with $A := \gamma(2-3K) + 6m\kappa\theta$ and
$B := 8\kappa\gamma(\theta\gamma - 8\kappa m)$. Substitute $s := m/\gamma = m/(1-m)$, a strictly
increasing bijection of $(0,1)$ onto $(0,\infty)$ under which $\gamma = 1/(1+s)$ and \eqref{mu2_pos}
becomes $s < \theta/(8\kappa)$, i.e.\ $m < m^\dagger$. Setting $h := c_{1,+}^{**}/\gamma$ and dividing
the quadratic by $\gamma^2$, $h = h(s)$ is the larger root of
\begin{equation}\label{eqn:h_quadratic}
P_s(h) := h^2 + 2h\big[(3K-2) - 6\kappa\theta s\big] - 8\kappa\theta + 64\kappa^2 s = 0 .
\end{equation}
For $s < \theta/(8\kappa)$ we have $P_s(0) = 64\kappa^2 s - 8\kappa\theta < 0$, so
\eqref{eqn:h_quadratic} has exactly one positive root, and
\begin{equation}\label{eqn:h_denom}
h(s) + (3K-2) - 6\kappa\theta s
= \sqrt{\big[(3K-2) - 6\kappa\theta s\big]^2 + 8\kappa\theta - 64\kappa^2 s} > 0 .
\end{equation}
Next, $h(s) < 16\kappa/(3\theta)$. Indeed, evaluating $P_s$ at $h_c := 16\kappa/(3\theta)$, the two
terms linear in $s$ cancel and
$$P_s(h_c) = 8\kappa\left(\frac{32\kappa}{9\theta^2} + \frac{4(3K-2)}{3\theta} - \theta\right) > 0 ,$$
since $K \ge 12$ and $0 < \theta < 2$ give $4(3K-2)/(3\theta) \ge 136/(3\theta) > 2 > \theta$; as
$P_s$ is an upward parabola with $P_s(0) < 0$, its larger root satisfies $h(s) < h_c$.
By \eqref{eqn:h_denom} the partial derivative $\partial_h P_s$ does not vanish at $h(s)$, so $h$ is
differentiable and implicit differentiation of \eqref{eqn:h_quadratic} gives
$$h'(s) = \frac{12\kappa\theta\,h(s) - 64\kappa^2}{2\big[h(s) + (3K-2) - 6\kappa\theta s\big]}
= \frac{12\kappa\theta\big(h(s) - \tfrac{16\kappa}{3\theta}\big)}{2\big[h(s) + (3K-2) - 6\kappa\theta s\big]} < 0 ,$$
the numerator being negative by $h(s) < h_c$ and the denominator positive by \eqref{eqn:h_denom}.
Finally $c_{1,+}^{**} = \gamma h(s) = h(s)/(1+s)$, so
$$\frac{\mathrm{d}}{\mathrm{d}s}\,c_{1,+}^{**} = \frac{(1+s)h'(s) - h(s)}{(1+s)^2} < 0 ,$$
using $h'(s) < 0$ and $h(s) > 0$. Since $s$ is strictly increasing in $m$, the claim follows.
\end{proof}
The preceding estimates from Theorem \ref{thm:cbo_mem_cv} show that the admissible coefficient ranges shrink in the limits used to recover PSO, under the assumptions therein, including small $c_1, c_2$, initial laws concentrated near the global minimizer, and $m$ bounded away from $1$. These restrictions limit the applicability of the convergence guarantees to swarms with broadly distributed initial positions, which are more commonly used in practice. We return to the practical implications of this gap in Sections~\ref{sec6}--\ref{sec7}. 
 
\section{Numerical Results} \label{sec6}
To illustrate the shape and size of the admissible set, we perform a numerical sweep of the parameter space $(c_1, c_2)$ using the derived convergence guarantees in the case of the PSO parameter coupling. We seek to identify the regime where the convergence conditions of Theorem \ref{thm:cbo_mem_cv} are simultaneously satisfied.
\newline 
\newline
We fix $m=0.5, \alpha=2.0$, and $\kappa=0.05$. To satisfy the memory buffer requirement derived in Eq. \eqref{mu2_pos}, we set $\theta = 0.5$ and $K=12.1$; we also fix the Hessian bound $c_J = 1$ in (B2). Figure \ref{fig:admissible_grid} illustrates the intersection of the resulting conditions as stated in Theorem \ref{thm:existence_mu1_mu2}, and we remark that the scale of the axes is on the order of $\mathcal{O}(10^{-3})$. 
\begin{figure}[!htbp]
\centering
\includegraphics[width=0.75\textwidth]{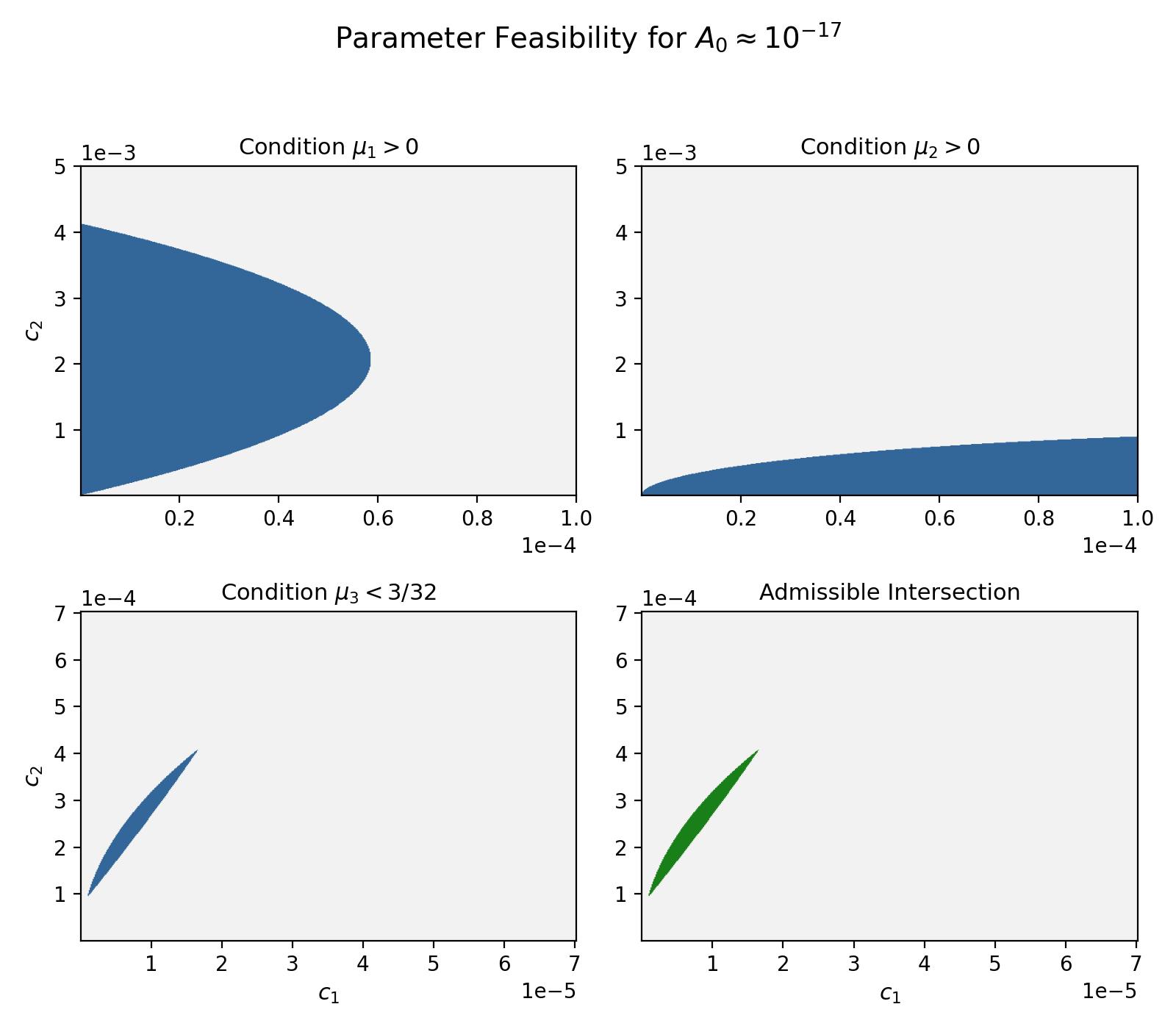}
\caption{Admissible parameter regions for PSO coupling. The sweep is conditional on the prescribed summary values $K = 12.1$, $A_0 = 10^{-17}$, $B_0 = 10^{-6}$ and $c_J = 1$, which are treated here as free parameters rather than computed from a specific objective and initial law. Panels (a), (b), and (c) show $\{\mu_1>0\}$, $\{\mu_2>0\}$ and $\{\mu_3 > 0\}$ respectively; panel (d) shows the fully admissible set for the prescribed budgets $A_0=10^{-17}$, $B_0=10^{-6}$. The intersection of $\mu_1$ (lower bound) and $\mu_2$ (upper bound) forms a wedge, while the energy condition $\mu_3$ restricts the feasible set further, to near-singular values of $c_1$.}
\label{fig:admissible_grid}
\end{figure}
\newline
\newline
We emphasize that this is a \emph{conditional algebraic feasibility check}, as the three quantities
$$K = \frac{12}{\mathbb{E}_{f_0}[e^{-\alpha J(\overline{Y}_0)}]}, \qquad A_0 = \frac{K}{12}\mathbb{E}_{f_0}[\mathcal{H}(0)], \qquad B_0 = \frac{K}{12}\mathbb{E}_{f_0}\big[|\nabla J(\overline{X}_0)|^2\big]$$
are all functionals of the \emph{same} pair $(J, f_0)$, but are prescribed here as independent summary parameters, with no objective and no initial law specified. What the sweep therefore establishes is: \emph{if} a pair $(J,f_0)$ realizes these three values, then the displayed $(c_1,c_2)$ region is exactly the set on which (H1)--(H3) hold. It does not by itself establish that such a pair exists. That gap is closed separately by Theorem \ref{thm:existence_family}, in which $K_\delta$, $\mathbb{E}_{f_0^{\delta}}[\mathcal{H}(0)]$ and $\mathbb{E}_{f_0^{\delta}}[|\nabla J(\overline{X}_0)|^2]$ are all computed from a specific law $f_0^\delta$ and are consequently tied to one another. 
\newline
\newline
The numerical results illustrate that while a non-empty admissible set exists, it is highly constrained. Concretely, for each fixed $c_1$, the two conditions act on $c_2$ from opposite sides of the interval $\mathcal{I}(c_1)=\mathcal{I}_1(c_1)\cap\mathcal{I}_2(c_1)$ established in Theorem \ref{thm:existence_mu1_mu2}:
\begin{itemize}
    \item $\mu_1 > 0$ enforces the \emph{lower} endpoint $c_2 > c_{2,-}^*(c_1)$ from Lemma \ref{lem:mu1_ranges}. Below this threshold, the linear drift term $\gamma c_2/m$ in $\mu_1$ (from $\lambda_2$) is too weak relative to the friction and $K$-weighted terms it must overcome, so the hypocoercivity argument underlying Theorem \ref{thm:cbo_mem_cv} no longer guarantees that the particles are pulled toward the weighted mean $m_\alpha(\rho_{Y,t})$ fast enough for $\mathbb{E}[\mathcal{H}(t)]$ to decay.
    \item $\mu_2 > 0$ enforces the \emph{upper} endpoint $c_2 < c_{2,+}^{**}(c_1)$ from Lemma \ref{lem:mu2_ranges}. Above this threshold, the coupling $\sigma_2\propto c_2$ (discussed in Section~\ref{sec5}) makes the diffusion terms, which scale quadratically in $c_2$, outweigh the drift terms; the sign conditions used to bound $\chi$ from below then fail, so the theorem no longer guarantees that the variance and velocity components of $\mathcal{H}(t)$ contract, rather than grow, along the dynamics.
\end{itemize}
In the colored region in Figure~\ref{fig:admissible_grid}, $\theta > 8\kappa m/\gamma$ holds, so $u_0(c_1)>0$ and the lower endpoint $c_{2,-}^{**}(c_1)$ of Lemma \ref{lem:mu2_ranges} is negative; the second bullet is therefore the only constraint $\mu_2$ imposes here. 
Figure \ref{fig:admissible_grid} visualizes $\mathcal{I}(c_1)$ as the wedge-shaped region enforced by these two opposing bounds as $c_1$ varies, narrowing as $c_1$ approaches $\overline{c}_1$ (defined in Section \ref{sec42}) and vanishing beyond it. The top row of the figure shows this directly: the individual regions $\mu_1>0$ and $\mu_2>0$ each occupy a large portion of the $(c_1,c_2)$-grid on their own, and it is only their intersection, $\mathcal{I}(c_1)$, that is confined to the thin wedge described above. The bottom row brings the third condition into view, and as a result, the fully admissible set where all three conditions hold simultaneously is far smaller than $\mathcal{I}(c_1)$ alone would suggest.
\newline
\newline
A critical finding of our numerical sweep is the role of the initial energy constants
$$A_0 := \tfrac{K}{12}\mathbb{E}_{f_0}[\mathcal{H}(0)] < \tfrac{K}{12}\eta_1(c_1,c_2) \qquad B_0 := \tfrac{K}{12}\mathbb{E}_{f_0}[|\nabla J(\overline{X}_0)|^2] < \tfrac{K}{12}\eta_2(c_1,c_2),$$
the $K$-rescaled versions of \eqref{eq:mu3_wellprep} in Theorem~\ref{thm:existence_mu1_mu2}. Because $\eta_1(c_1,c_2)$ carries a factor of $\chi^{-3}$ alongside $(c_1\chi)^{-1}$, and both $c_1$ and $\chi$ are themselves forced to be small by the $\mu_1,\mu_2$ wedge documented in Section~\ref{sec:proof_existence}, the ceiling $\frac{K}{12}\eta_1(c_1,c_2)$ (the largest $A_0$ any point of the wedge can tolerate) collapses very rapidly as $c_1\to0$. Maximizing this ceiling over the wedge at the parameters above, we find
$$\sup_{(c_1,c_2)\,\in\,\mathcal{I}} \tfrac{K}{12}\eta_1(c_1,c_2) = 3.61\times10^{-17}, \quad \text{attained at } c_1 = 6.27\times10^{-6},\ c_2 = 2.52\times10^{-4},\ \chi = 7.75\times10^{-6},$$
which is why Figure~\ref{fig:admissible_grid} prescribes $A_0=10^{-17}$. At this same point, the ceiling on the gradient term is $\frac{K}{12}\eta_2 = 2.40\times10^{-6}$, comfortably above the prescribed $B_0=10^{-6}$. Since both quantities shrink at the same rate, it is always the $A_0$ threshold that is reached first as $\delta\to0$, and the gradient budget $B_0$ is met automatically, with room to spare, at whatever $\delta$ the $A_0$ constraint already forces. In this sense, $B_0$  plays no role in setting how concentrated the initial swarm must be. With the above choices of parameters, the largest admissible $A_0$ over the wedge is $5.49\times10^{-18}$, $3.64\times10^{-17}$, $1.87\times10^{-15}$ and $6.53\times10^{-15}$ at $\theta = 0.45, 0.5, 1, 2$. We can see that the gain is real but saturates at the boundary $\theta = 2$, beyond which the constants in (H2) are no longer those of Theorem \ref{thm:cbo_mem_cv}. 
\newline 
\newline
Recalling that $\mathcal{H}(0)$ itself measures the initial variance, velocity, and deviation from the personal best (see \eqref{eqn:energy}), this is consistent with the tension identified in Section~\ref{sec5}: the present sufficient conditions certify decay only for swarms that are already close to consensus. What this does \emph{not} identify is a mechanism by which the classical algorithm escapes the restriction. Conditions (H1)--(H3) are sufficient, not necessary, and the quantities they constrain are the ones the particular Lyapunov functional \eqref{eqn:energy} happens to control; a different functional, or a sharper treatment of the same one, could well admit a far larger set of initial data and parameters for the same dynamics. Two further gaps separate what is analyzed here from what is run in practice, which are the regularization of the best-position selections, and the passage from the discrete iteration \eqref{eqn:pso_all} to continuous time; quantifying either is beyond the present analysis. 

\section{Conclusion} \label{sec7}
We characterized the conditions on model parameters and initial conditions such that existing convergence guarantees still apply for the classical first-order CBO model (Theorem \ref{thm:pso_in_cbo}) and the second-order CBO model with memory (Theorem \ref{thm:cbo_mem_cv}) under the PSO coupling (\ref{eqn:coupling}). In particular, for the more intricate second-order model, we demonstrated that there exists a non-empty admissible set of parameters and initial laws under the PSO parameter coupling wherein all constraints are satisfied (Theorem \ref{thm:existence_family}). The qualification matters, as the initial laws our construction produces concentrate around the global minimizer, so the statement is one of local, not global, convergence. Beyond this, the parameter tradeoffs identified through our analysis are primarily qualitative, and the rigid coupling between drift and diffusion in the PSO reduction imposes significant constraints on the admissible parameter space. 
\newline
\newline
The variance-based convergence analysis, inherited from \cite{Huang_2023}, operates through a two-fold procedure: establishing consensus via variance decay ($V(\rho_t) \rightarrow 0$) and subsequently applying Laplace's principle to ensure the consensus point approximates the global minimizer. For the memory variant, this decay is stated in terms of the functional $\mathcal{H}(t)$ from \eqref{eqn:energy} rather than the variance directly. Writing $f_t := \mathrm{Law}(\overline{X}_t,\overline{Y}_t,\overline{V}_t)$ for the joint mean-field law of the model with memory, as in \eqref{eqn:cbo_mem_mf}, and $V(f_t) := \tfrac{1}{2}\mathbb{E}|\overline{X}_t - \mathbb{E}[\overline{X}_t]|^2$ for the variance of its position marginal, the leading term of $\mathcal{H}(t)$ in \eqref{eqn:energy} is precisely $(\gamma/2m)^2\cdot 2V(f_t)$, so controlling $\mathcal{H}(t)$ controls $V(f_t)$ directly.
\newline 
\newline 
These cross terms play a crucial role, since the noise in the mean-field dynamics \eqref{eqn:cbo_mem_mf} enters only through $\overline{V}_t$ (the updates for $\overline{X}_t$ and $\overline{Y}_t$ are noise-free), the position variance $V(f_t)$ may not decay monotonically on its own, and mixing position, personal best, and velocity is exactly the hypocoercivity approach used to restore a Lyapunov structure. The conditions (H1)--(H3), i.e.\ $\mu_1,\mu_2,\mu_3$ thus leads to the exponential decay of $\mathbb{E}[\mathcal{H}(t)]$ (Theorem \ref{thm:cbo_mem_cv}) be read as exponential decay of the mean-field variance $V(f_t)$. It also means that when (H1)--(H3) fail, what fails is the closure of this particular differential inequality: The conditions are sufficient only, so their failure implies neither loss of coercivity nor absence of convergence; it means that this route to a decay rate is unavailable and that some other argument would be needed. As evidenced by our numerical experiments, this framework necessitates a high degree of ``well-preparedness'' of the initial distribution $f_0$, and the requirement of an extremely small initial energy illustrates that the current theory provides a result of local convergence rather than a proof of global convergence. 
\newline
\newline
This offers a theoretical motivation for the heuristic ``Restart'' strategies frequently employed in engineering applications of PSO, though not a justification of them. In practice, when a swarm fails to converge or stagnates, the particles are re-initialized with a tighter distribution around the current best estimate; the analysis above shows that the region of initial data for which our sufficient conditions certify decay is of exactly this concentrated form. The analogy has clear limits: our conditions concentrate the initial law around the \emph{global minimizer} $x^*$ (Theorem \ref{thm:existence_family}), whereas a restart concentrates around the \emph{current best iterate}, which need not be near $x^*$, and the guarantee is in any case for the regularized mean-field model rather than the algorithm as implemented. Furthermore, our analysis suggests that the independence of drift and diffusion coefficients in the original CBO model is fundamental to its theoretical tractability. In PSO, the coupling $\sigma_i\propto c_i$ means that the same mechanism penalized in a variance-based Lyapunov analysis (diffusion growing with $c_i$) is also the mechanism that facilitates global exploration in practice.
\newline
\newline
Moving forward, an important open question is whether a Wasserstein-decay ($\mathcal{W}_2$) approach can be extended to the second-order CBO with memory variant. A $\mathcal{W}_2$ framework may offer more robust bounds that better capture the interplay between momentum, memory, and stochastic exploration, potentially providing a pathway toward a global convergence guarantee for the original particle swarm optimization algorithm. Existing convergence guarantees for the first-order CBO model in this framework were shown in \cite{Fornasier_2024}.

\subsubsection*{Acknowledgments}
FH, DK and RT were supported by NSF CAREER Award 2340762 and by Caltech start-up funds. RT acknowledges the Caltech Student-Faculty Programs, the Hugh F. and Audy Lou Colvin family for financial support during the summer of 2024, and the NSF GRFP. The authors are also grateful to Lorenzo Pareschi for helpful discussions and for suggesting the initial question.  

\bibliographystyle{abbrv}
\bibliography{references}

\end{document}